\documentclass[12pt]{article}
\usepackage{amsmath,amssymb,graphicx,amsthm}

\newtheorem{theorem}{Theorem}[section]
\newtheorem*{theorem:repeat}{\tref{butterflystab}}
\newtheorem*{theorem:repeatmain}{\tref{main}}
\newtheorem{lemma}[theorem]{Lemma}
\newtheorem{corollary}[theorem]{Corollary}

\newtheorem{prop}[theorem]{Proposition}
\newtheorem{conjecture}[theorem]{Conjecture}

\newtheorem{definition}[theorem]{Definition}
\newtheorem{construction}[theorem]{Construction}

\newcommand\lref[1]{Lemma~\ref{lem:#1}}
\newcommand\tref[1]{Theorem~\ref{thm:#1}}
\newcommand\cref[1]{Corollary~\ref{cor:#1}}

\newcommand\sref[1]{Section~\ref{sec:#1}}
\newcommand\pref[1]{Proposition~\ref{prop:#1}}

\newcommand\cF{{\mathcal F}}

\newcommand\cI{{\mathcal I}}

\newcommand\cM{{\mathcal M}}


\textheight=8in \textwidth=6.5in \topmargin=0.3in \oddsidemargin=0in
\evensidemargin=0in

\title{On the number of maximal intersecting $k$-uniform families  and further applications of Tuza's set pair method}

\author{Zolt\'an L\'or\'ant Nagy\\
 \small MTA--ELTE Geometric and Algebraic Combinatorics Research Group \\[-0.8ex]
\small  H--1117 Budapest, P\'azm\'any P.\ s\'et\'any 1/C, Hungary.\\[-0.8ex]
\small \tt  nagyzoli@cs.elte.hu
\and 
Bal\'azs Patk\'os\thanks{J\'anos Bolyai Research Scholarship of the Hungarian Academy of Sciences}\\
\small MTA--ELTE Geometric and Algebraic Combinatorics Research Group\\[-0.8ex]
\small  H--1117 Budapest, P\'azm\'any P.\ s\'et\'any 1/C, Hungary and\\[-0.8ex]
\small  Alfr\'ed R\'enyi Institute of Mathematics, Hungarian Academy of Sciences\\[-0.8ex]
\small\tt patkosb@cs.elte.hu, patkos@renyi.hu}

\date{
\small Mathematics Subject Classifications: 05C88, 05C89}

\begin{document}
\maketitle

\begin{abstract}

We study the function $M(n,k)$ which  denotes the number of maximal $k$-uniform intersecting families $\cF\subseteq \binom{[n]}{k}$.  
Improving a bound of Balogh, Das, Delcourt, Liu and Sharifzadeh   on $M(n,k)$,   we determine the order of magnitude of $\log M(n,k)$ by proving that for any fixed $k$,  $M(n,k) =n^{\Theta(\binom{2k}{k})}$ holds. Our proof is based on Tuza's set pair approach.

The main idea is to bound
the size of the largest possible point set of a  cross-intersecting system. We also introduce and investigate some related functions and parameters. 
\end{abstract}

\section{Introduction}
Many problems in extremal combinatorics ask for the maximum possible size that a combinatorial structure can have provided it satisfies some prescribed property $P$.
 Questions  about the size of the `underlying set' of the combinatorial structure are much less frequently asked. (In many cases, this size is part of property $P$.) This note is devoted to an application of  Tuza's set pair method \cite{T} 
 which  provides good bounds for problems of the first type through results on problems of the second type.\\ The starting point of Tuza's method is the following celebrated theorem of Bollob\'as.

\begin{theorem}[Bollob\'as, \cite{B}]
\label{thm:boll} Let $A_1,A_2,\dots, A_m$ and $B_1,B_2,\dots,B_m$ be sets such that $|A_i|\le k$ and $|B_i|\le l$ hold for all $1\le i \le m$. Let furthermore these sets satisfy
\begin{enumerate}
\item[(1)]
$A_i \cap B_i=\emptyset$  for all $1\le i\le m$,
\item[(2)]
$A_i\cap B_j\neq \emptyset$  for all $1\le i,j \le m$, $i\neq j$.
\end{enumerate}
Then $\sum_{i=1}^m\frac{1}{\binom{|A_i|+|B_i|}{|A_i|}}\le 1$, in particular $m\le \binom{k+l}{l}$ holds.
\end{theorem}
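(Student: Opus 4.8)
The plan is to prove the (stronger, fractional) inequality by the random-permutation method, from which the bound $m\le\binom{k+l}{l}$ falls out at the end. Let $X=\bigcup_{i=1}^m(A_i\cup B_i)$ and $N=|X|$, and choose a uniformly random linear order $\pi$ of $X$ (equivalently, a uniformly random bijection $\pi\colon X\to\{1,\dots,N\}$). For each $i$, let $E_i$ be the event that in $\pi$ every element of $A_i$ comes before every element of $B_i$.

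The first step is to compute $\Pr[E_i]$. Because $A_i\cap B_i=\emptyset$, the set $A_i\cup B_i$ has exactly $|A_i|+|B_i|$ elements, and the order induced on them by a uniformly random order of $X$ is itself uniform over the $(|A_i|+|B_i|)!$ possibilities; of these, precisely $|A_i|!\,|B_i|!$ place all of $A_i$ before all of $B_i$. Hence $\Pr[E_i]=|A_i|!\,|B_i|!/(|A_i|+|B_i|)!=1/\binom{|A_i|+|B_i|}{|A_i|}$.

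The heart of the argument --- and the step I expect to be the only non-routine one --- is that the events $E_1,\dots,E_m$ are pairwise disjoint. Suppose $E_i$ and $E_j$ both occur with $i\neq j$. By condition~(2), pick $x\in A_i\cap B_j$ and $y\in A_j\cap B_i$. From $E_i$ together with $x\in A_i$ and $y\in B_i$ we get $\pi(x)<\pi(y)$; from $E_j$ together with $y\in A_j$ and $x\in B_j$ we get $\pi(y)<\pi(x)$, a contradiction. So at most one $E_i$ occurs, and therefore $\sum_{i=1}^m\Pr[E_i]\le 1$, which is exactly $\sum_{i=1}^m\binom{|A_i|+|B_i|}{|A_i|}^{-1}\le 1$.

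Finally, for the stated consequence I would use that $\binom{a+b}{a}=\binom{a+b}{b}$ is nondecreasing in each of $a$ and $b$ (since $\binom{n}{j}$ is nondecreasing in $n$ for fixed $j$), so $\binom{|A_i|+|B_i|}{|A_i|}\le\binom{k+l}{l}$ for every $i$; thus each term of the sum is at least $\binom{k+l}{l}^{-1}$, and the inequality gives $m\le\binom{k+l}{l}$. (An alternative route is the exterior-algebra / tensor-power proof in the style of Lov\'asz, assigning generic vectors to the points of $X$ and wedge products to the $A_i$ and $B_i$; but that approach yields only the cardinality bound and not the fractional strengthening, so the permutation method is the one to use here.)
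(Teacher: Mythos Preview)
Your argument is correct and complete: the random-permutation computation of $\Pr[E_i]$ is right, the disjointness of the $E_i$'s follows cleanly from conditions~(1) and~(2) (note that the elements $x$ and $y$ you pick are automatically distinct, since $x=y$ would force $x\in A_i\cap B_i$), and the deduction of $m\le\binom{k+l}{l}$ from the fractional inequality is fine.

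As for comparison: the paper does not give its own proof of this statement. \tref{boll} is quoted as a classical result with a reference to Bollob\'as's original paper~\cite{B}, and is used as a black box thereafter. Your write-up is the well-known probabilistic proof; it is worth remarking that the alternative you mention at the end (the exterior-algebra/tensor argument of Lov\'asz~\cite{L}) is precisely the one the paper alludes to when it discusses Frankl's skew version in \tref{skewboll}, since that linear-algebraic route is what generalises to the skew setting, whereas the permutation argument you give does not.
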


Pairs satisfying the conditions of \tref{boll} will be called \textit{cross intersecting set pairs} and if we want to emphasize the size condition of the $A_i$'s and $B_j$'s, then we call the system \textit{$(k,l)$-cross intersecting}.\\ Modifying Lov\'asz's proof \cite{L} of \tref{boll}, Frankl \cite{F} and later Kalai \cite{K} obtained the following skew version of the result.

\begin{theorem}[Frankl]
\label{thm:skewboll} Let $A_1,A_2,\dots, A_m$ and $B_1,B_2,\dots,B_m$ be sets  such that $|A_i|\le k$ and $|B_i|\le l$, satisfying the conditions 
\begin{enumerate}
\item[(1)]
$A_i \cap B_i=\emptyset$  for all $1\le i\le m$,
\item[(2')]
 $A_i\cap B_j\neq \emptyset$  for all $1\le i<j \le m$.
\end{enumerate}
Still the bound $m\le \binom{k+l}{l}$ remains valid.
\end{theorem}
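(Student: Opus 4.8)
The plan is to prove \tref{skewboll} by Kalai's exterior-algebra (``skew shifting'') refinement of Lov\'asz's tensor proof of \tref{boll}. First I would reduce to the uniform case $|A_i|=k$, $|B_i|=l$ for all $i$: going through the pairs one at a time, throw $k-|A_i|$ brand-new ground-set elements into $A_i$ and $l-|B_i|$ brand-new elements into $B_i$, using pairwise disjoint batches of fresh elements for different pairs. A fresh element lies in no $A_j$ and in no $B_j$, so condition (1) is preserved, while enlarging the $A_i$'s and $B_j$'s cannot destroy condition (2'); hence it suffices to treat the uniform case.

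Next, let $\mathbb{F}$ be an infinite field --- pass to an infinite extension if the original field is finite, which changes nothing in the end --- and let $[N]$ be the ground set. Pick vectors $v_1,\dots,v_N\in\mathbb{F}^{k+l}$ in general position, i.e.\ so that every $k+l$ of them are linearly independent; the images of $N$ distinct scalars under the moment map $t\mapsto(1,t,\dots,t^{k+l-1})$ work, by the Vandermonde determinant. Fix an ordering of $[N]$ and for $S\subseteq[N]$ set $v_S=\bigwedge_{x\in S}v_x$, so that $v_{A_i}\in\bigwedge^k\mathbb{F}^{k+l}$ and $v_{B_i}\in\bigwedge^l\mathbb{F}^{k+l}$.

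The two key observations are: (a) since $A_i\cap B_i=\emptyset$, the set $A_i\cup B_i$ consists of $k+l$ distinct indices whose vectors are linearly independent, so $v_{A_i}\wedge v_{B_i}\neq0$ in the one-dimensional space $\bigwedge^{k+l}\mathbb{F}^{k+l}$; and (b) if $i<j$ then $A_i\cap B_j\neq\emptyset$ by (2'), so $v_{A_i}\wedge v_{B_j}$ has a repeated vector and vanishes. From (a) and (b) I would deduce that $v_{A_1},\dots,v_{A_m}$ are linearly independent in $\bigwedge^k\mathbb{F}^{k+l}$: in a dependence $\sum_i c_iv_{A_i}=0$, let $j$ be the \emph{largest} index with $c_j\neq0$ and wedge with $v_{B_j}$; the summands with $i<j$ die by (b), those with $i>j$ die because $c_i=0$, and one is left with $c_j\,v_{A_j}\wedge v_{B_j}=0$, contradicting (a). As $\dim\bigwedge^k\mathbb{F}^{k+l}=\binom{k+l}{k}=\binom{k+l}{l}$, this gives $m\le\binom{k+l}{l}$.

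There is no genuinely hard step here, but the point requiring care is the direction of the triangular elimination: the skew hypothesis (2') only controls the pairs $A_i\cap B_j$ with $i<j$, so the pattern ``$v_{A_i}\wedge v_{B_j}$ vanishes'' is merely triangular rather than exhibiting the full symmetric non-vanishing of the Bollob\'as case, and one must therefore eliminate starting from the top index rather than the bottom --- this is precisely where the proof exploits that a one-sided intersection condition still suffices for linear independence. A minor secondary point is ensuring the general-position vectors exist, which is why one works over an infinite field; note also that this argument yields only the cardinality bound $m\le\binom{k+l}{l}$ and not the fractional strengthening present in \tref{boll}.
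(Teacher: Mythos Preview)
Your argument is correct and is precisely the Kalai exterior-algebra proof. Note, however, that the paper does not actually give a proof of \tref{skewboll}: it merely states the result and attributes it to Frankl~\cite{F} and Kalai~\cite{K}, remarking that it is obtained by ``modifying Lov\'asz's proof'' of \tref{boll}. So there is no in-paper proof to compare against; what you have written is exactly the proof the paper is alluding to via the citation to Kalai.
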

Pairs satisfying the conditions of \tref{skewboll} will be called \textit{skew cross intersecting set pairs}.

The vertex set of a (skew) cross intersecting system of set pairs is $V=\bigcup_{i=1}^m(A_i\cup B_i)$. Tuza was interested in the maximum possible size of the vertex set of a $(k,l)$-cross intersecting system. Let us write
\[
n(k,l)=\max\left\{\left|\bigcup_{i=1}^m(A_i\cup B_i)\right|: (A_1,B_1),\dots,(A_m,B_m) \text{ is a $(k,l)$-cross intersecting system} \right\}.
\]
Obviously, by \tref{boll}, we have $n(k,l)\le (k+l)\binom{k+l}{l}$, but the following upper bound was obtained in \cite{T}.

\begin{theorem}[Tuza \cite{T}]
\label{thm:tuza}
For positive integers $k\le l$ we have 
\[
\frac{1}{4}\binom{k+l+1}{k+1}< n(k,l)\le \sum_{i=1}^{2k-2}\binom{i}{\lfloor i/2\rfloor}+\sum_{i=2k-1}^{k+l-1}\binom{i}{l}<\binom{k+l+1}{k+1}.
\]
\end{theorem}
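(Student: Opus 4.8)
The plan is to prove the two inequalities separately; the upper bound is where the real content lies, and the lower bound is an explicit construction. For the upper bound, fix a $(k,l)$-cross intersecting system $(A_1,B_1),\dots,(A_m,B_m)$ with vertex set $V$, $|V|=N$, and assume the system is large (small systems, e.g.\ $m\le 1$ or $N\le k+l$, are handled directly). I would \emph{stratify} $V$: assign to a vertex $v$ a level $\ell(v)$ governed by the smallest total size $|A_i|+|B_i|$ among the pairs $i$ with $v\in A_i\cup B_i$. Since $A_i\cap B_i=\emptyset$, and since condition (2) forces both $A_i$ and $B_i$ to be nonempty once $m\ge 2$, these total sizes lie in $\{2,\dots,k+l\}$, which after the obvious reindexing matches the range $1,\dots,k+l-1$ of the sum in the statement.

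The heart of the matter is to bound the number $N_i$ of vertices of level $i$. For this I would pass to the subsystem $\cS_i$ formed by the pairs of total size exactly $i$, take its trace on $\{v:\ell(v)\le i\}$ (still cross intersecting), and then delete from each of its pairs the portion lying at levels strictly below $i$; what remains is a cross intersecting system on the level-$i$ vertices to which \tref{boll} applies. Working out which split sizes $(|A_i|,|B_i|)$ with $|A_i|\le k$ and $|B_i|\le l$ can occur at level $i$, and maximizing the resulting binomial coefficient $\binom{i}{|A_i|}$, produces exactly the summand in the theorem: while the balanced split $|A_i|=\lfloor i/2\rfloor$ is still admissible, i.e.\ for $i\le 2k-2$, the maximum is $\binom{i}{\lfloor i/2\rfloor}$, and once the constraint $|A_i|\le k$ becomes binding, i.e.\ for $i\ge 2k-1$, it is the displayed boundary value. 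Adding up $N=\sum_i N_i$ and applying the hockey-stick identity $\sum_{j=a}^{b}\binom{j}{a}=\binom{b+1}{a+1}$ — once to collapse the second sum into a single binomial, and once more to obtain the clean estimate $<\binom{k+l+1}{k+1}$ — completes the upper bound.

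For the lower bound I would build a system level by level: at level $i$ (for $i$ up to roughly $k+l$) introduce a fresh set of vertices of size close to the binomial coefficient appearing above, together with pairs of total size $i$ that certify each new vertex indeed sits at level $i$. The delicate point is to preserve the cross intersecting property across levels, which I would arrange on a linearly ordered ground set so that every $B$-part placed at a high level contains a transversal of all the $A$-parts placed below it, and symmetrically for the $A$-parts — a nested, staircase-type design in the spirit of the extremal configuration for \tref{skewboll}. Counting the resulting vertices, after discounting those spent on these transversals, loses only a bounded multiplicative factor; a more careful design would presumably do better, but the factor $\tfrac14$ is enough to pin down the order of magnitude.

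I expect the main obstacle to be the core step of the upper bound: upgrading a bound on the number of \emph{pairs} in the level-$i$ subsystem, which \tref{boll} hands over immediately, to a bound on the number of level-$i$ \emph{vertices}, since a priori a single pair of size $i$ could introduce as many as $i$ new vertices. Making this go through seems to require using the cross intersecting condition a second time \emph{within} each level — isolating a common core of the level-$i$ pairs and reapplying Bollob\'as to what remains — and then arranging the bookkeeping so that the total lands exactly on the claimed staircase sum rather than on something weaker by a factor of order $k+l$.
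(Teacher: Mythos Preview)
There is a genuine gap at the very first step of your upper bound. Your level function $\ell(v)=\min\{|A_i|+|B_i|:v\in A_i\cup B_i\}$ is a \emph{static} invariant of the given system, and in the extremal case it carries no information: since we are proving an upper bound on $n(k,l)$, we must in particular handle the system in which every pair has $|A_i|=k$ and $|B_i|=l$ exactly (indeed any system can be padded to this form with fresh vertices without decreasing $|V|$), and then $\ell(v)\equiv k+l$ for every $v$. Your subsystem $\cS_i$ is empty for $i<k+l$ and equals the whole system for $i=k+l$; the stratification collapses to a single level and you are back to the trivial bound $|V|\le (k+l)\binom{k+l}{k}$. The difficulty you flag in your last paragraph --- turning a bound on the number of pairs into a bound on the number of vertices --- is therefore not a residual technicality but the entire problem, and your proposed level function does nothing to address it. (A secondary issue: even if the levels were nontrivial, deleting from each pair ``the portion lying at levels strictly below $i$'' can destroy the cross intersecting property, since the unique witness of $A_i\cap B_j\neq\emptyset$ may well live at a lower level.)

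Tuza's actual mechanism, which the paper sketches inside the proof of \pref{skewpoints}, is \emph{dynamic} rather than static. Starting from $\cM_0=\{(A_i,B_i)\}_{i\in S_0}$, one repeatedly passes to a subfamily $S_{j+1}\subset S_j$ that is minimal subject to having the same vertex set; minimality guarantees that each surviving pair owns a private vertex $x_i$ not covered by any other surviving pair, and deleting these $x_i$'s yields a cross intersecting system $\cM_{j+1}$ whose pairs now have total size at most $k+l-(j+1)$. Because exactly $|\cM_{j+1}|$ vertices are peeled off at each step, one gets the identity $|V|=\sum_{j\ge 1}|\cM_j|$, and then Bollob\'as bounds each $|\cM_j|$ by the appropriate central or boundary binomial coefficient. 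The decreasing size parameter is \emph{manufactured} by the peeling procedure, not read off from the input; that is the idea your plan is missing.
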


\sref{max} is devoted  to prove another application of the set pair method, the main result of this note.
Apart from antichains the most studied set families are \textit{intersecting} families. We say that $\cF\subseteq 2^{[n]}$ is intersecting if $F_1 \cap F_2 \neq \emptyset$ holds for all $F_1,F_2 \in \cF$. It is well-known that all \textit{maximal} (unextendable) intersecting families $\cF\subseteq 2^{[n]}$ have size $2^{n-1}$. 
(Here and thereafter $[n]$ stands for the set $\{1,2,\ldots, n\}$.)
The investigation of $\lambda(n)$ and $\Lambda(n)$, the number of intersecting and maximal intersecting families, respectively, was started in \cite{E}. The exact values are known for small $n$ \cite{BMMV} and determining the order of magnitude of $\log \lambda(n)$ and $\log\Lambda(n)$ is an easy exercise.

 Recently, Balogh, Das, Delcourt, Liu, and Sharifzadeh \cite{BDDLS} studied the uniform version of the problem. The famous Erd\H os-Ko-Rado theorem \cite{EKR} states that an intersecting family $\cF\subseteq \binom{[n]}{k}$ can have size at most $\binom{n-1}{k-1}$ if $2k\le n$ holds. Furthermore, intersecting families achieving the extremal size consist of all $k$-sets containing a fixed element of $[n]$ provided $2k<n$. Balogh et al. define the  function $N(k)$ with the property that if $n\ge N(k)$, then the number of $k$-uniform intersecting families is $2^{(1+o(1))\binom{n-1}{k-1}}$. In their proof they obtain an upper bound on the number $M(n,k)$ of maximal $k$-uniform intersecting families. Here we improve on this bound and  we determine the order of magnitude of the exponent of $n$ in $M(n,k)$ for any fixed $k$.

\begin{theorem}
\label{thm:main} For any fixed integer $k$, as $n$ tends to infinity the function $M(n,k)$ satisfies
\[
 M(n,k)=n^{\Theta(\binom{2k}{k})}.
\]
Moreover, $$\frac{1}{8}\le \limsup_n\frac{\log M(n,k)}{\binom{2k}{k}\log n}\le 1.1 \mbox{ \ \ and \ \ }  \limsup_k \limsup_n\frac{\log M(n,k)}{\binom{2k}{k}\log n}\le 1$$ holds.
\end{theorem}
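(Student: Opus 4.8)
The plan is to prove the upper and lower bounds separately, the former from \tref{boll}--\tref{tuza}, the latter from the construction behind \tref{tuza}. Assume $k\ge2$; for $k=1$ one has $M(n,1)=n$ and both bounds are immediate.

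\emph{Upper bound.} Let $\cF\subseteq\binom{[n]}{k}$ be maximal intersecting. By maximality $\cF$ is exactly the family of $k$-sets meeting every member of $\cF$, so this family---call it $\cF^{+}$---is intersecting. For $\cH\subseteq\cF$ put $\cF_\cH:=\{G\in\binom{[n]}{k}:G\cap H\ne\emptyset\ \forall H\in\cH\}$; then $\cF\subseteq\cF_\cH$, with equality iff $\cF_\cH$ is intersecting. Choose $\cG\subseteq\cF$ minimal with $\cF_\cG=\cF$. For each $F\in\cG$, minimality gives $\cF_{\cG\setminus\{F\}}\supsetneq\cF$, hence $\cF_{\cG\setminus\{F\}}$ is not intersecting and contains two disjoint $k$-sets; as $\cF$ is intersecting, one of them, $B_F$, lies outside $\cF_\cG=\cF$, and since $B_F$ meets every member of $\cG\setminus\{F\}$ it is disjoint from $F$. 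With $A_F:=F$, the system $\{(A_F,B_F):F\in\cG\}$ is $(k,k)$-cross intersecting: $A_F\cap B_F=\emptyset$, and $A_F\cap B_{F'}\ne\emptyset$ for $F\ne F'$ because $F\in\cG\setminus\{F'\}$. By \tref{tuza} (with $l=k$), the point set $W:=\bigcup_{F\in\cG}(A_F\cup B_F)\supseteq\bigcup_{F\in\cG}F$ has
\[
|W|\ \le\ n(k,k)\ \le\ t(k):=\sum_{i=1}^{2k-2}\binom{i}{\lfloor i/2\rfloor}+\binom{2k-1}{k}.
\]
Since $\cF=\cF_\cG$ is determined by $W$ and by $\cG\subseteq\binom{W}{k}$, while there are at most $(t(k)+1)n^{t(k)}$ sets $W\subseteq[n]$ of size $\le t(k)$ and at most $2^{\binom{t(k)}{k}}$ subfamilies of $\binom{W}{k}$, we get $M(n,k)\le C_k\,n^{t(k)}$ with $C_k$ depending only on $k$. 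Hence $\limsup_n\frac{\log M(n,k)}{\binom{2k}{k}\log n}\le t(k)/\binom{2k}{k}$, and it remains to note the arithmetic facts $t(k)\le1.1\binom{2k}{k}$ for all $k$ (with equality at $k=3,4$, where $t(k)=22,77$) and $t(k)=(1+o(1))\binom{2k}{k}$---the latter because $\binom{2k-1}{k}=\tfrac12\binom{2k}{k}$ and $\sum_{i\le2k-2}\binom{i}{\lfloor i/2\rfloor}$ is asymptotic to twice its largest term $\binom{2k-2}{k-1}\sim\tfrac14\binom{2k}{k}$.

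\emph{Lower bound.} Here we construct many maximal intersecting families with covering number $2$. Given $\{a,b\}\subseteq[n]$, put $U:=[n]\setminus\{a,b\}$; for any \emph{maximal cross intersecting pair} $(\cA,\cB)$ of $(k-1)$-uniform families on $U$ (i.e.\ $A\cap B\ne\emptyset$ for all $A\in\cA,B\in\cB$, with no pair addable), set
\[
\cF:=\{\{a\}\cup A:A\in\cA\}\cup\{\{b\}\cup B:B\in\cB\}\cup\bigl\{\{a,b\}\cup C:C\in\tbinom{U}{k-2}\bigr\}.
\]
One checks $\cF$ is intersecting (the only nontrivial pairs are $(\{a\}\cup A,\{b\}\cup B)$, handled by cross intersection) and maximal (a $k$-set avoiding $\{a,b\}$ is disjoint from $\{a,b\}\cup C$ for suitable $C$; a $k$-set $\{a\}\cup A'$ with $A'\notin\cA$ is disjoint from some $\{b\}\cup B$ by maximality of $(\cA,\cB)$). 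So it suffices to exhibit $n^{(1/8-o(1))\binom{2k}{k}}$ such pairs. Take the $(k-1,k-1)$-cross intersecting system $(P_i,Q_i)_{i=1}^m$ realising the lower bound of \tref{tuza}, with point set $R$, $|R|=n(k-1,k-1)>\tfrac14\binom{2k-1}{k}=\tfrac18\binom{2k}{k}$, which we may assume ``nondegenerate''---$\bigcup_iP_i=R$, the hypergraph $\{P_i\}_i$ has a transversal of size $\le k-1$, and $\{P_i\}_i$ is determined by its family of $(k-1)$-element transversals. For each injection $\iota:R\hookrightarrow U$ let $\cB_\iota:=\{T\in\binom{U}{k-1}:T\cap\iota(P_i)\ne\emptyset\ \forall i\}$ and let $\cA_\iota$ be the $(k-1)$-sets meeting all of $\cB_\iota$; then $(\cA_\iota,\cB_\iota)$ is a maximal cross intersecting pair, and by nondegeneracy the resulting $\cF$ determines $\{a,b\}$, then $\cB_\iota$, then $\{P_i\}_i$ up to a bounded number of relabellings of $R$. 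As there are $\ge(n-2-|R|)^{|R|}$ injections, $M(n,k)\ge|R|!^{-1}(n-2-|R|)^{|R|}$, so $\limsup_n\frac{\log M(n,k)}{\binom{2k}{k}\log n}\ge|R|/\binom{2k}{k}>\tfrac18$.

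\emph{Main difficulty.} In the upper bound everything past the construction of the cross intersecting system is mechanical, and the only genuine check is the numerical claim $\max_k t(k)/\binom{2k}{k}=1.1$. The real work is in the lower bound: one must verify that Tuza's extremal set-pair system can be chosen, or adjusted by attaching $O(k)$-size gadgets, so as to be nondegenerate in the three senses above---most importantly so that $\{P_i\}_i$ admits a $(k-1)$-element transversal (without which $\cB_\iota=\emptyset$ and $\cF$ collapses to a star) and so that the planted copy of $R$ can be recovered from $\cF$ up to bounded ambiguity. Establishing these genericity properties of the extremal configuration is the crux.
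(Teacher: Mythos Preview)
Your upper bound is correct and essentially identical to the paper's: both pick a minimal generating subfamily $\cG$ (the paper calls it $\cF_0$), pair each $F\in\cG$ with a witness $B_F$ to obtain a $(k,k)$-cross intersecting system, then invoke \tref{tuza} to bound the size of the vertex set by $S(k)=\sum_{i=1}^{2k-1}\binom{i}{\lfloor i/2\rfloor}$, and finally count. The numerical facts $S(k)\le 1.1\binom{2k}{k}$ and $S(k)/\binom{2k}{k}\to 1$ are exactly the content of the paper's Proposition preceding the bound. (One small simplification: you do not need to argue via ``$\cF_{\cG\setminus\{F\}}$ is not intersecting, hence contains two disjoint sets''---just take any $B_F\in\cF_{\cG\setminus\{F\}}\setminus\cF_\cG$ directly; it is automatically disjoint from $F$.)

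Your lower bound, however, is genuinely incomplete, and you say so yourself: the three ``nondegeneracy'' properties you need of Tuza's extremal $(k-1,k-1)$ system---that $\{P_i\}$ admits a $(k-1)$-transversal, that distinct injections $\iota$ produce distinct $\cB_\iota$, and that $\iota(R)$ is recoverable from $\cF$ up to bounded ambiguity---are asserted but not verified. These are not minor bookkeeping items; without the first, $\cB_\iota$ may be empty and $\cF$ degenerates to a star, and without the last two the count $\ge (n-2-|R|)^{|R|}/|R|!$ is unjustified. So as written, the lower bound is a sketch with its main lemma missing.

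The paper avoids all of this by a much cleaner route. Instead of covering number $2$, it uses covering number $k$: take a $k$-uniform intersecting family $\cF$ with $\tau(\cF)=k$ and $|\bigcup\cF|=f(k)$. Such a family (after extending if necessary) is maximal intersecting not just inside its vertex set but globally---no $k$-set anywhere can be added---so every $X\in\binom{[n]}{f(k)}$ carries an isomorphic copy $\cF_X$, and distinct $X$ give distinct maximal families since $\bigcup\cF_X=X$. This yields $M(n,k)\ge\binom{n}{f(k)}$ with no recovery argument needed. Tuza's construction (on a $(2k-2)$-set $Y$, taking for each balanced partition $Y=E\cup E'$ a fresh point $x$ and the sets $E\cup\{x\}$, $E'\cup\{x\}$) gives $f(k)\ge 2k-2+\tfrac12\binom{2k-2}{k-1}>\tfrac18\binom{2k}{k}$, and the constant $\tfrac18$ follows immediately.
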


The proof of \tref{main} uses the upper bound in \tref{tuza}. In \sref{vertices} we first prove an upper bound on $n(k,l)$ that is weaker than that of \tref{tuza}, but its proof technique is completely different: it involves skew cross intersecting systems. Therefore it is natural to introduce the following analog of the function $n(k,l)$: 
\[
n_1(k,l)=\max\left\{\left|\bigcup_{i=1}^m(A_i\cup B_i)\right|: (A_1,B_1),\dots,(A_m,B_m) \text{ is a $(k,l)$-skew cross intersecting system} \right\}.
\]
We finish Section 3 by presenting lower and upper bounds on $n_1(k,l)$.

\vskip 0.3truecm

Before starting to prove our theorems let us mention that there has been recent activity \cite{BNR,GHO,HK1,HK2} on the following problem of Balogh, Bohman and Mubay \cite{BBM} related to maximal intersecting families: let $H(n,k,p)$ denote the random $k$-uniform hypergraph obtained from $\binom{[n]}{k}$ by keeping each edge with probability $p$ independently of all other choices. What is the size of the largest intersecting family in $H(n,k,p)$ and what its structure looks like. The above mentioned papers settled this question for all interesting values of $k=k(n)$ and $p=p(n)$.


\section{Proof of the main theorem}
\label{sec:max}

We start with the lower bound of \tref{main}. For a family $\cF$ of sets its \textit{covering number} $\tau(\cF)$ is the minimum size that a \textit{transversal} $G$ of $\cF$ can have. A transversal of $\cF$ is a set meeting all $F\in \cF$. Clearly, $\tau(\cF)\le k$ holds for all intersecting $k$-uniform families as any set in $\cF$ is a transversal. Let us define the function $f(k)$ by
\[
f(k)=\max\{|\cup_{F\in \cF}F|: \cF ~\text{is $k$-uniform intersecting with}\ \tau(\cF)=k\}.
\]

Note that $f(k)$ is finite (see  \cite{EL} ), while the condition $\tau(\cF)=k$ is essential in the sense that $|\cup_{F\in \cF}F|$ could be arbitrarily large if $\cF$ was $k$ uniform intersecting with $\tau(\cF)<k$.
Many similar functions concerning $k$-uniform intersecting families with covering number $k$ were introduced and studied in \cite{EL} (and later by many other researchers). The following example is due to Tuza \cite{T}.

\begin{construction}
 Let $|Y| =2k-2$. For each partition $Y$ as  $E\cup E'= Y$, $|E| = |E'| =k-1$ we take a new points $x$, and set $E \cup \{x\}$, $E' \cup \{x\}$. In this way we obtain $\binom{2k-2}{k-1}$ $k$-element sets forming an intersecting family with covering number $k$, such that the union of these sets consists of $2k-2+\frac{1}{2}\binom{2k-2}{k-1}$ points. 
\end{construction}

 \begin{corollary} $\frac{1}{8}\binom{2k}{k} <2k-2+\frac{1}{2}\binom{2k-2}{k-1} \leq f(k)$.
\end{corollary}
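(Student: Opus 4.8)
The statement to prove is the Corollary: $\frac{1}{8}\binom{2k}{k} < 2k-2+\frac{1}{2}\binom{2k-2}{k-1} \le f(k)$.

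The right inequality is immediate from the Construction: the family exhibited there is $k$-uniform, intersecting, and has covering number exactly $k$ (this is asserted in the Construction), and its vertex set has size $2k-2+\frac12\binom{2k-2}{k-1}$. Since $f(k)$ is the maximum of $|\bigcup_{F\in\cF}F|$ over all such families, we get $2k-2+\frac12\binom{2k-2}{k-1}\le f(k)$ directly. So there is nothing to do here beyond citing the Construction.

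The left inequality is a routine binomial estimate. The plan is to drop the (positive) term $2k-2$ and reduce to showing $\frac18\binom{2k}{k} < \frac12\binom{2k-2}{k-1}$, i.e. $\binom{2k}{k} < 4\binom{2k-2}{k-1}$. Now use the exact ratio
\[
\frac{\binom{2k}{k}}{\binom{2k-2}{k-1}} = \frac{(2k)!\,(k-1)!\,(k-1)!}{k!\,k!\,(2k-2)!} = \frac{(2k)(2k-1)}{k\cdot k} = \frac{2(2k-1)}{k} = 4 - \frac{2}{k},
\]
which is strictly less than $4$ for every positive integer $k$. Hence $\binom{2k}{k} = \left(4-\frac2k\right)\binom{2k-2}{k-1} < 4\binom{2k-2}{k-1}$, giving $\frac18\binom{2k}{k} < \frac12\binom{2k-2}{k-1} \le 2k-2+\frac12\binom{2k-2}{k-1}$, as desired. (One should double-check the degenerate small cases: for $k=1$ the middle expression is $0+\frac12\binom{0}{0}=\frac12$ while $\frac18\binom21=\frac14$, so the strict inequality holds; for $k\ge 2$ the computation above applies verbatim.)

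There is essentially no obstacle here — the only mild subtlety is making sure the Construction genuinely yields covering number exactly $k$ and not smaller (otherwise it would not be an admissible family for $f(k)$), but that is part of the Construction's claim and can be taken as given. The left inequality, while it looks like it might require Stirling-type asymptotics, collapses to the one-line exact identity $\binom{2k}{k}/\binom{2k-2}{k-1}=4-2/k$, so no estimation is actually needed. The constant $\frac18$ is not tight; it is chosen to make the final bound in Theorem~\ref{thm:main} clean.
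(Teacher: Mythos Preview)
Your proof is correct and matches the paper's intent: the paper states the Corollary without proof, treating both inequalities as immediate from the Construction and a trivial binomial estimate, and your argument supplies precisely those details. The key identity $\binom{2k}{k}/\binom{2k-2}{k-1}=4-2/k$ is exactly the right way to handle the left inequality.
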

\vskip 0.3truecm

The following proposition finishes the proof of the lower bound of \tref{main}.

\begin{prop}
For any positive integers $k$ and $n$ we have $\binom{n}{f(k)}\le M(n,k)$.
\end{prop}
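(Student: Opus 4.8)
The plan is to produce, for each choice of an $f(k)$-element subset $S \subseteq [n]$, a distinct maximal $k$-uniform intersecting family, and to argue that different choices of $S$ yield different maximal families. Start from a $k$-uniform intersecting family $\cF_0$ with $\tau(\cF_0) = k$ whose union has exactly $f(k)$ points; such a family exists by the definition of $f(k)$. Given an arbitrary $f(k)$-set $S \subseteq [n]$, relabel the ground set of $\cF_0$ by a bijection onto $S$, obtaining an intersecting family $\cF_S \subseteq \binom{S}{k} \subseteq \binom{[n]}{k}$ with covering number $k$ and $\bigcup_{F \in \cF_S} F = S$. Then extend $\cF_S$ to a maximal intersecting family $\cG_S \subseteq \binom{[n]}{k}$ arbitrarily (but via some fixed rule, e.g.\ greedily in a fixed order of $\binom{[n]}{k}$, so the choice is well-defined).

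The key step is to recover $S$ from $\cG_S$, which shows the map $S \mapsto \cG_S$ is injective. The point is that $\tau(\cG_S) = k$: indeed $\tau(\cG_S) \le k$ since any edge is a transversal, and $\tau(\cG_S) \ge \tau(\cF_S) = k$ because $\cF_S \subseteq \cG_S$ means every transversal of $\cG_S$ is a transversal of $\cF_S$. So $\cG_S$ is a maximal intersecting family with covering number exactly $k$. For such families one can identify $S$ as follows: a point $x \in [n]$ lies in $S$ iff removing all edges through $x$ drops the covering number, equivalently iff $x$ belongs to the union of a "kernel" subfamily forcing $\tau = k$. Cleaner still: because $\cF_S$ already has $\tau = k$ and lives inside $\binom{S}{k}$, and $f(k)$ is by definition the maximum union size over \emph{all} $k$-uniform intersecting families of covering number $k$, the set of points of $\cG_S$ that lie in \emph{some} sub-$\binom{[n]}{k}$-family of $\cG_S$ with covering number $k$ has size at most $f(k)$ and contains $S$; a short argument (using maximality of $\cG_S$ and that adding any edge inside $\binom{S}{k}$ to $\cF_S$ keeps $\tau = k$) pins this canonical set down to be exactly $S$. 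Hence $S$ is determined by $\cG_S$.

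I expect the main obstacle to be making this "recover $S$" step airtight, i.e.\ choosing the right canonical invariant of a maximal intersecting family $\cG$ with $\tau(\cG)=k$ that provably equals $S$ whenever $\cG = \cG_S$. A safe route that sidesteps subtleties: use a \emph{lexicographically first} extension rule and define the invariant to be the union of the lexicographically-first sub-family of $\cG$ achieving covering number $k$; then track carefully that this first such sub-family of $\cG_S$ consists of edges inside $S$, using that edges inside $S$ are "cheaper" in the lex order once one arranges the labelling of $[n]$ so that $S$'s elements... — but since $S$ ranges over all $f(k)$-sets this ordering trick needs care, so instead it is cleanest simply to let $\cG_S$ be \emph{any} maximal extension and count unordered families: the number of distinct $\cG_S$ obtained is at least the number of $S$ divided by the number of $f(k)$-sets contained in the union-of-kernel of a single $\cG$, and that multiplicity is bounded by a constant depending only on $k$, which is absorbed since we only need $M(n,k) \ge \binom{n}{f(k)}/C_k = n^{\Theta(\binom{2k}{k})}$. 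To get the stated bound $\binom{n}{f(k)} \le M(n,k)$ exactly, I would push through the canonical-invariant argument above, as the multiplicity $C_k$ can in fact be shown to be $1$: the union of \emph{all} sub-families of $\cG_S$ with covering number $k$ is contained in $S$ (any such sub-family has union of size $\le f(k)$, and if it reached outside $S$ one derives a covering-number-$k$ family on more than $f(k)$ points by merging with $\cF_S$, contradicting the definition of $f(k)$), and it contains $\bigcup_{F\in\cF_S}F = S$, so it equals $S$.
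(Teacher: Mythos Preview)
Your final merging argument is correct and does the job: if $\cH\subseteq\cG_S$ has $\tau(\cH)=k$, then $\cH\cup\cF_S\subseteq\cG_S$ is intersecting with covering number $k$, so $|\bigcup(\cH\cup\cF_S)|\le f(k)=|S|$; since this union contains $S$, it equals $S$ and in particular $\bigcup\cH\subseteq S$. But you have made the recovery of $S$ far harder than necessary. Apply your own merging argument with $\cH=\cG_S$ itself: you already noted $\tau(\cG_S)=k$, so $|\bigcup\cG_S|\le f(k)$, while $\bigcup\cG_S\supseteq\bigcup\cF_S=S$; hence $\bigcup\cG_S=S$. The plainest invariant of all, the union of the whole family, already recovers $S$, so there was never any need for kernels, lex-first subfamilies, or a multiplicity constant $C_k$.

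The paper's proof removes even the extension step. It observes that the extremal family $\cF_0$ may be taken to be maximal intersecting from the outset: any extension still has covering number $k$ and can only enlarge the union, but the union is already at its maximum value $f(k)$, so an extremal family that is also maximal exists. A maximal intersecting $k$-uniform family with $\tau=k$ is unextendable in \emph{any} ambient set, because a $k$-set $G$ meeting every $F\in\cF_S$ yields a transversal $G\cap S$ of size at most $k$, forcing $G\subseteq S$. Thus one may simply take $\cG_S=\cF_S$, and injectivity is immediate from $\bigcup\cF_S=S$. Your route reaches the same destination, but the detours all stem from not noticing that the condition $\tau=k$ pins the entire maximal family to the chosen $f(k)$ points.
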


\begin{proof}
Consider a $k$-uniform intersecting family $\cF$ with $\tau(\cF)=k$ and $|\cup_{F\in\cF}F|=f(k)$. As adding more sets to $\cF$ can only increase the size of the union, we may assume that $\cF$ is maximal intersecting. Every set $X\in \binom{[n]}{f(k)}$ contains at least one family $\cF_X$ isomorphic to $\cF$. As $\cF_X\neq \cF_Y$ whenever $\cup_{F\in\cF_X}F=X\neq Y=\cup_{F\in\cF_Y}F$, we have at least $\binom{n}{f(k)}$ different maximal intersecting $k$-uniform subfamilies of $\binom{[n]}{k}$.
\end{proof}

As we mentioned in the proof, the value of $f(k)$ is attained at a maximal intersecting family. Note that such a family is unextendable not only by any $k$-subsets of its underlying set, but by any $k$-sets in the universe at all. This kind of maximal intersecting set systems were studied a lot, the best known upper bound on $f(k)$ is due to Majumder \cite{M}, stating that $f(k) \leq (1+o(1))\frac{3}{2}\binom{2k-2}{k-1}$.

\vskip 0.2truecm

We now turn our attention to the upper bound of \tref{main}. We start by describing the basic ideas of Balogh, Das, Delcourt, Liu, and Sharifzadeh \cite{BDDLS}. For a family $\cF\subseteq \binom{[n]}{k}$ of sets let $\cI(\cF)=\{G\in\binom{[n]}{k}: \forall F\in\cF: F\cap G\neq \emptyset\}$, that is if $\cF$ is intersecting, then $\cI(\cF)$ denotes the family of those sets that can be added to $\cF$ preserving the intersecting property. Clearly, $\cF$ is maximal intersecting if and only if $\cF\subseteq \cI(\cF)$ holds with equality. For any maximal intersecting family we can assign a subfamily $\cF_0\subseteq \cF$ that is minimal with respect to the property $\cI(\cF_0)=\cF$ (note that $\cF_0$ is not necessarily unique). Then by definition, for every $F\in\cF_0$ there exists a $G\in \cI(\cF_0\setminus \{F\})\setminus \cF$, thus this $G$ intersects all sets in $\cF$ but $F$. Therefore the sets of $\cF_0$ and their pairs $G$ satisfy the condition of \tref{boll} and thus by  above, we obtain that $|\cF_0|\le \binom{2k}{k}$. Moreover, if $\cF_0=\{F_1,F_2,\dots, F_s\}$ and $G_i$ is a set in $\cI(\cF_0\setminus \{F_i\})\setminus \cF$, then the set of pairs $\{(A_i,B_i)\}^{2s}_{i=1}$ with
$A_i=F_i, B_i=G_i$ for $1\le i\le s$ and $A_i=G_{i-s}, B_i=F_{i-s}$ for $s<i\le 2s$ is skew cross intersecting and thus by \tref{skewboll} the inequality $|\cF_0|\le \frac{1}{2}\binom{2k}{k}$ holds. Since the mapping of maximal intersecting families via $\cF \mapsto \cF_0$ is injective, Balogh et al. obtained $M(n,k)\le \sum_{j=1}^{\frac{1}{2}\binom{2k}{k}}\binom{\binom{n}{k}}{j}=O(n^{\frac{k}{2}\binom{2k}{k}})$. Comparing this to our lower bound, we see that the exponent is off only by a factor of $4k$. In what follows we show how to improve the previous upper bound.

\vskip 0.3truecm

In order to obtain our upper bound, we will use the function $n(k,l)$. As the argument of Balogh et al. yields a cross intersecting system in which sets of the first co-ordinate form an intersecting family on their own, we introduce the following:
\[
g(k)=\max\{|\cup_{i=1}^sA_i|: \{(A_i,B_i)\}^s_{i=1} ~\text{is $(k,k)$-cross intersecting and $\{A_i\}^s_{i=1}$ is intersecting} \}.
\]
By definition, we have $g(k)\le n(k,k)$. The following lemma and proposition complete the proof of the upper bound of \tref{main}.

\begin{lemma}
\label{lem:bound} $M(n, k)\le 2^{2^{g(k)}}\binom{n}{g(k)}$.
\end{lemma}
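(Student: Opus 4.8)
The plan is to encode each maximal intersecting family $\cF\subseteq\binom{[n]}{k}$ by a bounded amount of data living on a small vertex set, so that the number of possible codes is at most $2^{2^{g(k)}}\binom{n}{g(k)}$. First I would take a maximal intersecting $\cF$ and, exactly as in the argument of Balogh et al. recalled above, choose a minimal subfamily $\cF_0=\{F_1,\dots,F_s\}$ with $\cI(\cF_0)=\cF$, together with sets $G_i\in\cI(\cF_0\setminus\{F_i\})\setminus\cF$. Since $\cF_0$ is itself intersecting and the pairs $(F_i,G_i)_{i=1}^s$ form a $(k,k)$-cross intersecting system, the definition of $g(k)$ gives $\big|\bigcup_{i=1}^s F_i\big|\le g(k)$. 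Call this vertex set $V$; so $|V|\le g(k)$ and we may record $V$ by specifying a subset of $[n]$ of size at most $g(k)$, which contributes the factor $\binom{n}{g(k)}$ (after summing $\binom{n}{j}$ over $j\le g(k)$, which is at most $\binom{n}{g(k)}$ for $n$ large, or one absorbs the lower-order terms into the constant — I would simply bound it crudely).

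The key point is then that $\cF$ is \emph{determined} by $V$ together with the subfamily $\cF_0$, because $\cF=\cI(\cF_0)$; and $\cF_0$ is a family of $k$-subsets of $V$, hence $\cF_0\subseteq\binom{V}{k}\subseteq 2^V$, so $\cF_0$ is one of at most $2^{2^{|V|}}\le 2^{2^{g(k)}}$ subsets of the power set of $V$. Strictly speaking $\cF$ as a family of subsets of $[n]$ is recovered from the pair $(V,\cF_0)$ by the rule $\cF=\{G\in\binom{[n]}{k}:\forall F\in\cF_0,\ G\cap F\neq\emptyset\}=\cI(\cF_0)$, using $\cI(\cF_0)=\cF$ for our chosen $\cF_0$. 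Thus the assignment $\cF\mapsto(V,\cF_0)$ is injective into a set of size at most $2^{2^{g(k)}}\binom{n}{g(k)}$, giving the claimed bound $M(n,k)\le 2^{2^{g(k)}}\binom{n}{g(k)}$.

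The only subtlety I anticipate — and the step I would be most careful about — is the claim that $\bigcup_{i=1}^s F_i$ already contains all the information, i.e. that the $G_i$ need not be recorded: this is fine precisely because $\cF=\cI(\cF_0)$ depends on $\cF_0$ alone, and $\cF_0$ lives inside $\binom{V}{k}$. One should double-check that $s\le\binom{2k}{k}$ (or $\tfrac12\binom{2k}{k}$ via \tref{skewboll}) is \emph{not} needed here — it is not, since we are counting subsets of $2^V$ rather than choices of $s$ sets from $\binom{[n]}{k}$ — which is exactly where the improvement over the $n^{\Theta(k\binom{2k}{k})}$ bound comes from: the $n$-dependence has been pushed entirely into the choice of the $g(k)$-element ground set $V$, and everything else is a constant depending only on $k$. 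I would also note in passing that this reduces the whole problem to bounding $g(k)$, which is the task of the next proposition and is where \tref{tuza} enters.
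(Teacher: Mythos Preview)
Your proposal is correct and follows essentially the same argument as the paper: pick a minimal $\cF_0$ with $\cI(\cF_0)=\cF$, use the cross-intersecting pairs $(F_i,G_i)$ with intersecting first coordinates to bound $|\bigcup_i F_i|\le g(k)$, and then count the possible $\cF_0$'s as subfamilies of $2^X$ for some $X\in\binom{[n]}{g(k)}$. The only cosmetic difference is that the paper simply takes $X$ to be any $g(k)$-element superset of $\bigcup_i F_i$, which cleanly avoids the summation over $j\le g(k)$ you hedge about.
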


\begin{proof}
Let us consider a function $f$ that maps any maximal intersecting $k$-uniform family $\cF$ to one of its subfamily $\cF_0$ that is minimal with respect to the property that $\cI(\cF_0)=\cF$. As mentioned earlier, $f$ is injective, $\cF_0$ is intersecting and the set of pairs $\{(F_i,G_i)\}_{i=1}^{|\cF_0|}$ is $(k,k)$-cross intersecting. Thus by definition $|\cup_{F\in\cF_0}F|\le g(k)$ holds. Therefore the set families that can be the image of a maximal intersecting $k$-uniform family with respect to $f$ are subfamilies of $2^X$ for some $X\in \binom{[n]}{g(k)}$. The number of such families is not more than $2^{2^{g(k)}}\binom{n}{g(k)}$.
\end{proof}

Though it was not mentioned in \cite{T}, the summation form of the upper bound of \tref{tuza} provides much better estimation in the case $k=l$.

\begin{prop}\label{pontos} Let $S(k)$ denote Tuza's upper bound on $n(k,k)$ in \tref{tuza}, that is, $S(k) = \sum_{i=1}^{2k-1}\binom{i}{\lfloor i/2\rfloor}$. Then
\begin{enumerate}
\item[(i)]
$g(k)\leq n(k,k)\leq S(k) \leq 1.1\cdot\binom{2k}{k}$,
\item[(ii)]
$s(k):=\frac{S(k)}{\binom{2k}{k}} \rightarrow 1$ if $k \rightarrow \infty.$
\end{enumerate}
\end{prop}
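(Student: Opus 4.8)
The plan is to analyze the sum $S(k) = \sum_{i=1}^{2k-1}\binom{i}{\lfloor i/2\rfloor}$ directly by comparing it, term by term, to the central binomial coefficient $\binom{2k}{k}$. The chain of inequalities in part (i) is trivial except for the last step $S(k)\le 1.1\binom{2k}{k}$, so the real content is to bound $S(k)/\binom{2k}{k}$ from above by $1.1$ for all $k$, and in part (ii) to show this ratio tends to $1$.

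First I would observe that the dominant term of $S(k)$ is its last one, $\binom{2k-1}{k-1}=\binom{2k-1}{k}=\tfrac12\binom{2k}{k}$. The next term down is $\binom{2k-2}{k-1}$, and in general the central binomial coefficients $\binom{i}{\lfloor i/2\rfloor}$ decay geometrically as $i$ decreases from $2k-1$: each time we drop $i$ by $2$ (say from $2j$ to $2j-2$), the ratio $\binom{2j-2}{j-1}/\binom{2j}{j} = \frac{j}{2(2j-1)}$ is at most $\tfrac14$ for all $j\ge1$ and tends to $\tfrac14$. Hence, grouping the terms of $S(k)$ in consecutive pairs from the top and bounding each pair by a geometric comparison with the preceding pair, I would get
\[
S(k) \le \left(\binom{2k-1}{k-1}+\binom{2k-2}{k-1}\right)\sum_{t\ge 0}\left(\tfrac14\right)^t + O(1) = \tfrac43\left(\binom{2k-1}{k-1}+\binom{2k-2}{k-1}\right)+O(1).
\]
Since $\binom{2k-1}{k-1}=\tfrac12\binom{2k}{k}$ and $\binom{2k-2}{k-1}=\tfrac{k}{2(2k-1)}\binom{2k}{k}\le\tfrac14\binom{2k}{k}$, this gives $S(k)\le \tfrac43\cdot\tfrac34\binom{2k}{k}+O(1)=\binom{2k}{k}+O(1)$, which already yields $s(k)\to 1$ from above; combined with the obvious lower bound $S(k)\ge\binom{2k-1}{k-1}=\tfrac12\binom{2k}{k}$ — actually one wants $s(k)\ge 1$, which follows because $S(k)$ contains the telescoping-like identity $\binom{2k-1}{k-1}+\binom{2k-2}{k-1}+\dots$ whose partial sums one can compare to $\binom{2k}{k}$ via Pascal's rule — we obtain part (ii).

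For the explicit constant $1.1$ in part (i), rather than optimizing the asymptotic argument I would just verify the bound computationally for small $k$ (say $k\le 10$), where the $O(1)$ error terms matter most, and use the clean asymptotic estimate $S(k)=(1+o(1))\binom{2k}{k}$ above to handle all large $k$; the crossover is comfortably below the $1.1$ threshold since the geometric-tail bound shows the overshoot is at most a lower-order additive term. Alternatively, one can push the pairwise-grouping inequality to be fully rigorous for every $k$ by noting the ratio $\binom{2j-2}{j-1}/\binom{2j}{j}=\frac{j}{2(2j-1)}$ is \emph{increasing} in $j$ and bounded by $\tfrac14$, so the geometric majorization is valid with ratio exactly $\tfrac14$ and the resulting closed form $S(k)\le \tfrac43\left(\tfrac12+\tfrac14\right)\binom{2k}{k}\cdot(1+o(1))=\binom{2k}{k}(1+o(1))$ is honest.

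The main obstacle is purely bookkeeping: making the pairwise grouping of the $2k-1$ terms clean (the top term $i=2k-1$ is unpaired, and one must decide whether to pair $(2k-2,2k-3)$ or $(2k-3,2k-4)$, etc.) and tracking the small additive constants carefully enough that the final bound is genuinely below $1.1$ rather than merely $1+o(1)$. There is no deep idea here — it is a one-variable estimate on a sum of central binomial coefficients — so I expect the proof to be short, with the only care needed being in the finitely many small cases for the explicit $1.1$.
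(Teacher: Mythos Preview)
Your approach --- bounding $S(k)$ directly by comparing the central binomial terms to a geometric series --- is different from the paper's, which instead uses the recursion $S(k+1)=S(k)+\binom{2k}{k}+\binom{2k+1}{k}$ to show that $s(k)$ is eventually monotone decreasing and then argues by contradiction that the limit cannot exceed~$1$. Either route is natural for this kind of one-variable estimate.

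However, your argument as written contains a genuine error. You assert that the ratio
\[
\frac{\binom{2j-2}{j-1}}{\binom{2j}{j}}=\frac{j}{2(2j-1)}
\]
is increasing in $j$ and bounded above by $\tfrac14$. In fact the opposite is true on both counts: since $4j-2<4j$ one has $\dfrac{j}{2(2j-1)}>\dfrac14$ for every $j\ge 1$, and the sequence is \emph{decreasing} (from $\tfrac12$ at $j=1$ down to $\tfrac14$ in the limit). Consequently your geometric majorization with ratio $\tfrac14$ goes the wrong way: you cannot bound the tail from above by $\sum_{t\ge 0}(1/4)^t$ times the top pair, and the clean closed form $\tfrac43\bigl(\tfrac12+\tfrac14\bigr)\binom{2k}{k}$ is not a valid upper bound. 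The same mistake recurs in your ``alternatively'' paragraph.

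The fix is easy but changes the constants: for $j\ge 2$ the ratio is at most $\tfrac13$, so one can majorize by a geometric series with ratio $\tfrac13$ after peeling off the bottom couple of terms; this still yields $S(k)=(1+o(1))\binom{2k}{k}$ and hence part~(ii), and the explicit $1.1$ in part~(i) then follows from checking finitely many small $k$ as you propose. You should also make precise the claim $s(k)>1$ for the lower half of~(ii), which at present you only gesture at via ``Pascal's rule''; one clean way is to verify $S(k)>\binom{2k}{k}$ directly from $S(k)-S(k-1)=\binom{2k-1}{k-1}+\binom{2k-2}{k-1}$ and induction.
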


\begin{proof} Statement (i) can be confirmed easily for $k\leq 4$, and for $k>4$ simple inductive argument works.  For statement (ii), one can easily check that $s(k)>1$ holds, and the sequence $s(k)$ is monotone decreasing from $k=4$. Moreover the limit cannot be greater than $1$, since  if $s(k)>(1+\varepsilon)$  held with a fixed $\varepsilon>0$ for all $k$, that would imply $\frac{s(k+1)}{s(k)}\leq \frac{4k+3}{4k+2}\frac{1}{(1+\varepsilon)}$, a contradiction.
\end{proof}


\section{Bounds on the size of the vertex set}
\label{sec:vertices}

In the forthcoming section we present lower and upper bounds on $n(k,l)$ and $n_1(k,l)$, that is, on the maximal size of the underlying set of a (skew) cross intersecting system. 

\begin{construction}[Erd\H os-Lov\'asz, \cite{EL}] Let $Y$ be a set of $2k-2$ elements. For each subset $A'\subset Y$, $|A'|=k-1$, we assign a set pair $(A, B)$ such that $|A|=k=|B|$ holds,  $A'\subset A$, $(Y\setminus A')\subset B$ and the one element sets $A\setminus Y$, $B\setminus Y$ are disjoint.   In this way we obtain $\binom{2k-2}{k-1}$  set pairs  such that the union of these sets consists of $2k-2+2\binom{2k-2}{k-1}$ points. 
\end{construction}

This construction slightly improves  the general lower bound of  \tref{tuza} on $n(k,l)$ in the special case $k=l$. Thus in view of Proposition \ref{pontos}, this provides

\begin{prop}  $2k-2+2\binom{2k-2}{k-1} \leq n(k,k) \leq 1.1\cdot\binom{2k}{k}.$
\end{prop}
\medskip

\noindent In the spirit of Tuza's approach, the following upper bound  is obtained on $n(k,l)$. 

\begin{lemma}
\label{lem:g}
$n(k,l)\le \binom{k+l}{l+1}+\binom{k+l}{k+1}$.
\end{lemma}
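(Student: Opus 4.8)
The plan is to split the vertex set $V=\bigcup_{i=1}^m(A_i\cup B_i)$ of a $(k,l)$-cross intersecting system into two classes, attach to the vertices of one class the pairs of a $(k-1,l+1)$-skew cross intersecting system and to the vertices of the other class the pairs of a $(k+1,l-1)$-skew cross intersecting system, and then invoke \tref{skewboll} for each. This gives
\[
n(k,l)\le \binom{(k-1)+(l+1)}{l+1}+\binom{(k+1)+(l-1)}{l-1}=\binom{k+l}{l+1}+\binom{k+l}{k+1}.
\]

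The pairs are produced by \emph{sliding a vertex across an incident pair}. If $v\in A_{i}$ for some $i=i(v)$, replace $(A_{i(v)},B_{i(v)})$ by $(A_{i(v)}\setminus\{v\},\,B_{i(v)}\cup\{v\})$: as $v\notin B_{i(v)}$, this has sides of sizes at most $k-1$ and $l+1$ and they are disjoint. Dually, if $v\in B_{i}$ for some $i=i(v)$, replace $(A_{i(v)},B_{i(v)})$ by $(A_{i(v)}\cup\{v\},\,B_{i(v)}\setminus\{v\})$, of type $(k+1,l-1)$. A vertex lying only on $A$-sides must be routed to the first class and a vertex lying only on $B$-sides to the second class, whereas a vertex of $W:=\big(\bigcup_iA_i\big)\cap\big(\bigcup_iB_i\big)$ may be routed either way.

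The crux is to check that, for a suitable linear order of $V$, each class is a \emph{skew} cross intersecting family. For $v$ before $v'$, both in the first class, with images $(C_v,D_v)$, $(C_{v'},D_{v'})$, we need $C_v\cap D_{v'}\neq\emptyset$. If $v'\in A_{i(v)}$ then $v'\in C_v\cap D_{v'}$; otherwise $C_v\cap D_{v'}=(A_{i(v)}\cap B_{i(v')})\setminus\{v\}$, and since $v'\in A_{i(v')}\setminus A_{i(v)}$ we have $i(v)\neq i(v')$, so $A_{i(v)}\cap B_{i(v')}\neq\emptyset$ by cross-intersection; the only way the condition can fail is $A_{i(v)}\cap B_{i(v')}=\{v\}$, which forces $v\in B_{i(v')}$, i.e.\ $v\in W$. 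The dual analysis applies to the second class. Hence, ordering every vertex of $W$ after all the one-sided vertices of its class, all skew conditions with a one-sided earlier vertex hold for free; in particular when $W=\emptyset$ the two classes are even ordinarily cross intersecting and \tref{boll} suffices at once.

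The main obstacle is the routing and internal ordering of $W$: one must choose, for each $v\in W$, an incident pair, a class, and a position (after the one-sided vertices) so that inside each class no ``unique witness'' conflict remains. Such a conflict between $v$ and $v'$ forces one of them to be the sole common element of an $A$-side and a $B$-side of the original system, and it may be bad in both orders, so the routing — not merely the order — has to be chosen cleverly. One natural way to finish is by induction that removes the vertices of $W$ one at a time: a vertex of $W$ that is not the unique common element of any $A_i$ and $B_j$ can be ``split'' into an $A$-copy and a $B$-copy, yielding a cross intersecting system with one extra vertex and a strictly smaller $W$; the remaining case, where every vertex of $W$ is such a unique common element, is exactly where \tref{skewboll} together with a deliberate ordering (rather than plain cross-intersection) is needed, and establishing that a consistent routing and ordering always exist there is the real work.
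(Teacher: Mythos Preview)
Your proposal is not a complete proof: you yourself flag the crucial step --- routing and ordering the vertices of $W$ in the ``remaining case'' where every $v\in W$ is the unique element of some $A_i\cap B_j$ --- as ``the real work'', and then do not carry it out. This is a genuine gap, not a routine omission. The obstruction you describe is real: if $v,v'\in W$ are both routed to class~1 and satisfy $A_{i(v)}\cap B_{i(v')}=\{v\}$, $A_{i(v')}\cap B_{i(v)}=\{v'\}$, $v'\notin A_{i(v)}$, $v\notin A_{i(v')}$, then $C_v\cap D_{v'}=\emptyset=C_{v'}\cap D_v$ and the skew condition fails in \emph{both} orders; one of them must be rerouted or given a different incident pair, and nothing you have written shows that a globally consistent choice exists. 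The splitting induction is fine, but it only reduces to this base case rather than eliminating it.

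The paper's argument sidesteps the routing problem entirely by indexing the skew system by the original \emph{pairs} rather than by vertices. Fix any order on the pairs and set $B'_i:=B_i\cap\bigl(\bigcup_{j<i}(A_j\cup B_j)\bigr)$. Then $\{(A_i,B'_i)\}_i$ is automatically skew cross intersecting, since for $i<j$ one has $A_i\subseteq\bigcup_{h<j}(A_h\cup B_h)$ and hence $A_i\cap B'_j=A_i\cap B_j\neq\emptyset$. Writing $\beta_t$ for the number of indices $i$ with $|B_i\setminus B'_i|\ge t$, one gets $\beta_t\le|\{i:|B'_i|\le l-t\}|\le\binom{k+l-t}{k}$ by \tref{skewboll}, and $\sum_{t=1}^{l}\beta_t\le\binom{k+l}{k+1}$ bounds the number of vertices whose first appearance is in some $B_i$; the symmetric computation gives $\binom{k+l}{l+1}$ for the $A$-side. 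The two halves are exactly your intended class sizes, but obtained without ever having to assign or order individual vertices, because the truncation $B_i\mapsto B'_i$ inherits skewness directly from the cross-intersection of the original system.
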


\begin{proof}
Let $\{(A_i,B_i)\}^s_{i=1}$ be a set of cross intersecting pairs with $|A_i|\le k$ and $|B_i|\le l$ for all $1\le i\le s$. Let $\alpha_t=|\{i: |A_i\setminus (\cup_{j=1}^{i-1}(A_j\cup B_j))|\ge t\}|$ and $\beta_t=|\{i: |B_i\setminus (\cup_{j=1}^{i-1}(A_j\cup B_j))|\ge t\}|$. Clearly, we have 
\[
 |\bigcup_{i=1}^s(A_i\cup B_i)|=\sum_{t=1}^k{\alpha_t}+   \sum_{t=1}^l{ \beta_t} .
\]
Let us bound $\beta_t$. Observe that if we write $B'_i=B_i\cap (\cup_{j=1}^{i-1}(A_j\cup B_j))$, then the set of pairs $\{(A_i,B'_i)\}_{i=1}^s$ is skew cross intersecting. Moreover  $$|B_i\setminus (\cup_{j=1}^{i-1}(A_j\cup B_j))|\ge t$$ holds for $i$ if and only if $|B'_i|\le l-t$. Hence   $\beta_t$ is equal to the number of skew cross intersecting set pairs $\{(A_i,B'_i)\}$ where $|A_i|\le k$ and $|B'_i|\le l-t$.
 Applying \tref{skewboll} we obtain $\beta_t\le \binom{k+l-t}{k}$, and as the role of $\alpha_t$ and $\beta_t$ is similar we also have $\alpha_t\le \binom{k+l-t}{l}$. Consequently,
\[
|\bigcup_{i=1}^s(A_i\cup B_i)|\le \sum_{t=1}^k\binom{k+l-t}{l}+\sum_{t=1}^l\binom{k+l-t}{k}= \binom{k+l}{l+1}+\binom{k+l}{k+1}.
\]
\end{proof}

 This slightly improves the bound $\binom{k+l+1}{l+1}$ of \tref{tuza} when $k=l$. However, in view of  Proposition \ref{pontos}, Tuza's bound involving a summation is still better even in this case.

Recall that
\[
n_1(k,l)=\max\left\{\left|\bigcup_{i=1}^m(A_i\cup B_i)\right|: (A_1,B_1),\dots,(A_m,B_m) \text{ is a $(k,l)$-skew cross intersecting system} \right\}.
\]

\noindent Our second result gives lower and upper bounds on $n_1(k,l)$. In order to do this, we recall what a reverse lexicographic order (or sometimes called \textit{colex order}) is.

\begin{definition} A reverse lexicographic order of the $k$-element subsets of $[n]$  is defined by the relation\\ \centerline{ $C <D$ for $C, D\in \binom{[n]}{k} \Leftrightarrow $ \ the largest element of the symmetric difference $C\triangle D$ is in $D$.}
\end{definition}

\begin{construction}\label{skewpelda} Let $Y$ be the set $Y=[k+l]$. Consider the reverse lexicographic order of all the $k$-element subsets of $Y$. Let $A_i=\{a_{i,1}, a_{i,2}, \ldots, a_{i,k}\}$ $(i=1\ldots\binom{k+l}{k})$  be the $i$th set in this order with the $a_{i,j}$'s enumerated in increasing order, and let $B_i$ be defined as follows. $B_i\cap Y= [a_{i,k}] \setminus A_i$ and let all the sets $B_i\setminus Y$ be pairwise disjoint for all $i$ such that $|B_i|=k$.
\end{construction}

 \begin{prop} $k+l+\binom{k+l}{k+1}\leq n_1(k,l)$.
\end{prop}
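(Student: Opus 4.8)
The plan is to show that Construction~\ref{skewpelda} produces a $(k,l)$-skew cross intersecting system whose vertex set has exactly $k+l+\binom{k+l}{k+1}$ elements. Recall the setup: $Y=[k+l]$, the sets $A_1<A_2<\dots<A_m$ with $m=\binom{k+l}{k}$ are the $k$-subsets of $Y$ in colex order, and for each $i$ we put $B_i\cap Y=[a_{i,k}]\setminus A_i$, where $a_{i,k}=\max A_i$, while $B_i\setminus Y$ consists of fresh points, chosen pairwise disjoint over $i$ and disjoint from $Y$, so that $|B_i|=l$. This last step is legitimate because $|B_i\cap Y|=a_{i,k}-k\le (k+l)-k=l$. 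By construction $|A_i|=k$, $|B_i|=l$, and $A_i\cap B_i=\emptyset$ holds trivially, since $B_i\cap Y=[a_{i,k}]\setminus A_i$ avoids $A_i$ and $B_i\setminus Y$ avoids $Y\supseteq A_i$. So it remains to check condition (2$'$) of \tref{skewboll} and to count the vertices.

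For the skew intersecting property, I would fix $i<j$ and show $A_i\cap B_j\neq\emptyset$. The key observation is that $i<j$ forces $a_{i,k}\le a_{j,k}$: if instead $a_{i,k}>a_{j,k}\ge\max A_j$, then $a_{i,k}\notin A_j$, so $a_{i,k}$ is the largest element of $A_i\triangle A_j$ and it lies in $A_i$, which by the definition of the colex order means $A_j<A_i$, contradicting $i<j$. Hence $A_i\subseteq[a_{i,k}]\subseteq[a_{j,k}]$. Since $A_i\neq A_j$ and $|A_i|=|A_j|=k$, there is some $x\in A_i\setminus A_j$, and then $x\in[a_{j,k}]\setminus A_j=B_j\cap Y\subseteq B_j$, so $x\in A_i\cap B_j$. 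This establishes that the system is $(k,l)$-skew cross intersecting, and its vertex set is $V=Y\cup\bigcup_i(B_i\setminus Y)$.

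To finish, I would compute $|V|$. Because each $A_i\subseteq Y$ and the sets $B_i\setminus Y$ are pairwise disjoint and disjoint from $Y$, we have $|V|=(k+l)+\sum_{i=1}^m|B_i\setminus Y|=(k+l)+\sum_{i=1}^m\big(l-(a_{i,k}-k)\big)$. The number of $k$-subsets of $[k+l]$ whose maximum equals $r$ is $\binom{r-1}{k-1}$, hence $\sum_i a_{i,k}=\sum_{r=k}^{k+l}r\binom{r-1}{k-1}=k\sum_{r=k}^{k+l}\binom{r}{k}=k\binom{k+l+1}{k+1}$ by the hockey-stick identity. Thus $\sum_i(k+l-a_{i,k})=(k+l)\binom{k+l}{k}-k\binom{k+l+1}{k+1}$, and using $\binom{k+l+1}{k+1}=\binom{k+l}{k}+\binom{k+l}{k+1}$ together with $k\binom{k+l}{k+1}=\tfrac{kl}{k+1}\binom{k+l}{k}$ this simplifies to $\binom{k+l}{k+1}$. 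Therefore $|V|=k+l+\binom{k+l}{k+1}$, which yields the stated lower bound on $n_1(k,l)$.

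The only step that really needs care is the colex-order argument for condition (2$'$); everything else is a direct, if slightly fiddly, binomial computation. It is also worth checking the two extreme cases explicitly — $a_{i,k}=k$ (i.e.\ $A_i=[k]$), where $B_i\cap Y=\emptyset$ and $B_i$ consists of $l$ fresh points, and $a_{i,k}=k+l$, where $B_i\subseteq Y$ contributes no new vertex — to be sure the construction and the count behave as expected at the boundary.
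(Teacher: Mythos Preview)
Your argument is correct and follows the same route as the paper: verify that Construction~\ref{skewpelda} is $(k,l)$-skew cross intersecting via the colex observation $A_i\subseteq[a_{j,k}]$, then count the fresh points in the $B_i$'s. The only cosmetic difference is the evaluation of $\sum_i(k+l-a_{i,k})$: the paper rewrites $\sum_{c=0}^{l}(l-c)\binom{k+c-1}{k-1}$ as the double sum $\sum_{x=0}^{l-1}\sum_{c=0}^{x}\binom{k+c-1}{k-1}$ and applies the hockey-stick identity twice, whereas you compute $\sum_i a_{i,k}=k\binom{k+l+1}{k+1}$ directly and then simplify; both reach $\binom{k+l}{k+1}$.
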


\begin{proof} Construction \ref{skewpelda} provides  a $(k,l)$-skew cross intersecting set system.
Indeed, $A_i\cap B_i=\emptyset$, while  $A_i\cap B_j\neq\emptyset$ for $i<j$, since $A_i\subseteq [a_{i,k}]\subseteq [a_{j,k}]$, hence $A_i\cap B_j\supseteq A_i\cap ([a_{j,k}]\setminus A_j)\neq \emptyset$. Observing that the number of $k$-sets $A_j$ with $a_{j,k}=k+c$ is $\binom{k+c-1}{k-1}$  \ (assuming $c\leq l$), we get  $$\left|\bigcup_{i=1}^m(A_i\cup B_i)\right|= k+l+ \sum_{c=0}^l  (l-c)\binom{k+c-1}{k-1}.$$ Next notice that $$\sum_{c=0}^l  (l-c)\binom{k+c-1}{k-1}=\sum_{x=0}^{l-1}\sum_{c=0}^x  \binom{k+c-1}{k-1} = \sum_{x=0}^{l-1} \binom{k+x}{k}=\binom{k+l}{k+1},$$ hence the result follows.
\end{proof}

Note that Construction \ref{skewpelda} shows that the calculation in \lref{g} to bound $\beta_t$ is tight and thus to obtain better bounds on $n_1(k,l)$ one has to use further ideas.

\vskip 0.2truecm

The proof below of the upper bound on $n_1(k,l)$ is based on Tuza's approach \cite{T} to determine $n(k,l)$.

\begin{prop}
\label{prop:skewpoints} Let $k\leq l$ be positive integers. Then $n_1(k,l)\le  \binom{k+l+2}{k+1} -\binom{k+l}{k}-2$ holds.
\end{prop}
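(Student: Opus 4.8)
The plan is to adapt the argument of \lref{g}, but instead of splitting the vertex set according to how many \emph{new} vertices each $A_i$ or $B_i$ contributes to the prefix union, I would process the pairs in a cleverly chosen order and count, for each pair, the new vertices coming from $A_i$ and from $B_i$ separately, using that in the skew setting one half of the cross-intersection conditions is automatic for the suffix rather than the prefix. Concretely, list the pairs as $(A_1,B_1),\dots,(A_m,B_m)$ so that, say, the sets are ordered by a parameter that makes the family $\{(A_i,B'_i)\}$ with $B'_i=B_i\cap(\bigcup_{j<i}(A_j\cup B_j))$ (and symmetrically with the $A$'s truncated against the \emph{suffix}) skew cross-intersecting; then $\alpha_t$ counts $i$ with $|A_i\setminus(\bigcup_{j<i}\cdots)|\ge t$, equivalently $|A_i\cap(\bigcup_{j<i}\cdots)|\le k-t$, and one gets $\alpha_t\le\binom{k+l-t}{l}$ exactly as before; the gain must come from $\beta_t$. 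For $\beta_t$ I would \emph{reverse} the roles: write $B''_i=B_i\cap(\bigcup_{j>i}(A_j\cup B_j))$, so that $\beta_t=|\{i:|B_i\setminus(\bigcup_{j<i}\cdots)|\ge t\}|$ relates to the number of $i$ with $|B''_i|$ small together with the count of $i$ whose only new $B$-vertices are "seen later" — and here the point is that Construction~\ref{skewpelda} is tight for the naive bound, so the improvement has to be extracted by observing that the \emph{last} few pairs in the order cannot each contribute a full $k$ new $B$-vertices, because their $A$-sets must already intersect all earlier $B$'s.

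The key steps, in order, would be: (1) fix a "good" linear order on the pairs — the natural candidate is to order by $\max(A_i\cup B_i)$ under a colex-type order, mirroring Construction~\ref{skewpelda}, so that the prefix/suffix truncation arguments both apply; (2) re-derive $\sum_t\alpha_t\le\binom{k+l}{l+1}$ verbatim from \tref{skewboll} applied to the prefix-truncated family; (3) for the $B$-side, split the index set into the pairs $i$ for which $B_i$ contributes at least one new vertex only relative to the suffix versus the prefix, apply \tref{skewboll} \emph{twice} (once to $\{(A_i,B'_i)\}$ in the given order, once to $\{(B''_i,A_i)\}$ in the reversed order, noting the reversal turns condition (2') back into (2')), and add the two bounds, which should produce something like $\sum_t\beta_t\le\binom{k+l+1}{k+1}-\binom{k+l}{k}-1$ after telescoping; (4) combine (2) and (3), and simplify $\binom{k+l}{l+1}+\binom{k+l+1}{k+1}-\binom{k+l}{k}-1$ using Pascal's rule into the claimed $\binom{k+l+2}{k+1}-\binom{k+l}{k}-2$, checking the arithmetic against small cases $k=l=1,2$.

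The main obstacle I anticipate is step (3): making the two applications of the skew Bollobás inequality genuinely count \emph{disjoint} sets of contributions, so that adding the bounds is legitimate rather than double-counting. The truncation trick in \lref{g} works cleanly because "new vertices of $B_i$ relative to the prefix" is a single well-defined quantity; once I try to also harvest a saving from the suffix side I need an honest partition of each $B_i$'s new vertices (or of the index set) into a "prefix-facing" part and a "suffix-facing" part, and the skew condition only controls one direction at a time. I expect the resolution to be a careful choice of order together with the observation — exactly the one flagged after Construction~\ref{skewpelda} — that the colex order makes $\beta_t$ for the top values of $t$ strictly smaller than the generic $\binom{k+l-t}{k}$, and quantifying that deficit is where the $-\binom{k+l}{k}-2$ correction term will come from. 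Everything else (Pascal-rule bookkeeping, the two base cases) is routine.
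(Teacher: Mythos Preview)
Your proposal has a genuine gap, and it is precisely the one you yourself flag: step~(3) is not a proof but a hope. You never exhibit the ``careful choice of order'' nor the ``honest partition'' of the new $B$-vertices into prefix-facing and suffix-facing parts; you only say you \emph{expect} such a resolution. Without it, the two applications of \tref{skewboll} you propose (one to $\{(A_i,B'_i)\}$, one to $\{(B''_i,A_i)\}$ in reverse) bound overlapping quantities, and there is no mechanism preventing double-counting. This is the entire content of the argument, and it is missing.

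There is also a concrete arithmetic error that shows the target in step~(3) is too strong. You claim
\[
\binom{k+l}{l+1}+\binom{k+l+1}{k+1}-\binom{k+l}{k}-1 \;=\; \binom{k+l+2}{k+1}-\binom{k+l}{k}-2,
\]
but by Pascal's rule $\binom{k+l+2}{k+1}=\binom{k+l+1}{k+1}+\binom{k+l+1}{k}=\binom{k+l+1}{k+1}+\binom{k+l}{k}+\binom{k+l}{l+1}$, so the right-hand side exceeds the left by $\binom{k+l}{k}-1$. In other words, your step~(3) would yield a bound strictly stronger than the proposition, which is a sign the plan cannot succeed as written. (Try $k=l=1$.)

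The paper takes a completely different route, following Tuza's layering technique rather than the $\alpha_t,\beta_t$ counting of \lref{g}. One builds a nested sequence $S_0\supseteq S_1\supseteq\cdots$ of index sets, where $S_{j+1}$ is minimal with the same vertex set as $S_j$ under the current (shrunken) pairs; each $i\in S_{j+1}$ then has a private vertex $x_i$, which is deleted to form $\cM_{j+1}$. This gives $|\bigcup_i(A_i\cup B_i)|=\sum_j|\cM_j|$, with each $\cM_j$ still skew cross-intersecting. The key new idea for the skew case is that one cannot simply apply Bollob\'as's inequality to $\cM_j$ as Tuza does; instead one \emph{partitions} $\cM_j$ according to the pair $(a,b)=(|A_i\setminus A_i^j|,\,|B_i\setminus B_i^j|)$ with $a+b=j$, applies \tref{skewboll} to each part to get $\binom{k+l-j}{k-a}$, and then sums over all $j$ and all admissible $(a,b)$. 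The double sum telescopes to the stated bound.
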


\begin{proof}
Let $\{(A_i,B_i)\}^s_{i=1}$ be a set of skew cross intersecting pairs with $|A_i|\le k$ and $|B_i|\le l$ for all $1\le i\le s$ and let us define $S_0=[s]$ and $\cM_0=\{(A^0_i,B^0_i)\}^s_{i=1}$ with $A^0_i=A_i$ and $B^0_i=B_i$. If $S_{j}$ and $\cM_j$ are defined for some $j\le k+l-2$, then let $S_{j+1}\subset S_j$ be an index set minimal with respect to the property that
\[
\bigcup_{i\in S_j}(A^j_i\cup B^j_i)=\bigcup_{i\in S_{j+1}}(A^j_i\cup B^j_i).
\]
By minimality for every $i\in S_{j+1}$ there exists a point $x_i \in (A^j_i\cup B^j_i)\setminus \bigcup_{l \in S_{j+1}\setminus \{i\}} (A^j_l\cup B^j_l)$. Let us define $A^{j+1}_i=A^j_i\setminus \{x_i\}, B^{j+1}_i=B^j_i\setminus \{x_i\}$ for all $i\in S_{j+1}$ and set $\cM_{j+1}=\{(A^{j+1}_i,B^{j+1}_i): i\in S_{j+1}\}$. Observe that $\cM_j$ is skew intersecting for all $1\le j\le k+l-1$ with $|A^j_i \cup B^j_i|\le k+l-j$ for all $i \in S_j$ and furthermore 
\[
\bigcup_{i=1}^s(A_i \cup B_i)=\sum_{j=1}^{k+l-1}|\cM_j|.
\]
In Tuza's original proof the $\cM_j$'s are cross intersecting and therefore he can use Bollob\'as's inequality to obtain $|\cM_j|\le \binom{k+l-j}{\lceil \frac{k+l-j}{2}\rceil}$ for any $j$ and $|\cM_j|\le \binom{k+l-j}{k}$ if $j\le l-k$. As Bollob\'as's inequality is not valid for skew intersecting pairs, therefore we partition $\cM_j$ into some subsystems indexed by the pairs $(|A_i\setminus A^j_i|, |B_i\setminus B^j_i|)$. Note that by the construction of the $\cM_j$'s for the index pairs $(a,b)$ we have $0\le a,b\le j$, $a+b=j$, $a\leq k$ and $b\leq l$. For such a subsystem of $\cM_j$,  indexed by $(a,b)$,   we can apply \tref{skewboll} and obtain the upper bound $\binom{k+l-j}{k-a}$. Thus adding these up for all $\cM_j$, $j\in [1, k+l-1]$, we get 

$$\sum_{j=1}^{k+l-1} \sum_{a=0}^j \binom{k+l-j}{k-a} = \sum_{\substack{\beta \leq k \\  \alpha \leq \beta+l}} \binom{\alpha}{\beta} - \binom{0}{0}-\binom{k+l}{k}.$$

Here $$\sum_{\substack{\beta \leq k \\  \alpha \leq \beta+l}} \binom{\alpha}{\beta} = \sum_{\beta=0}^k \sum_{\gamma=0}^l\binom{\beta+\gamma}{\beta} = \sum_{\beta=0}^k \binom{\beta+l+1}{\beta+1}= \binom{k+l+2}{k+1} -1,$$ 
confirming the statement.
\end{proof}

In \cite{T3}, Tuza proposed the investigation of the so-called \textit{weakly cross intersecting set pair systems}, which are closely related to the cross intersecting set pair systems.

\begin{definition}
Let $A_1,A_2,\dots, A_m$ and $B_1,B_2,\dots,B_m$ be sets such that $|A_i|= k$ and $|B_i|= l$ holds for all $1\le i \le m$. Let furthermore these sets satisfy
\begin{enumerate}
\item[(1)]
$A_i \cap B_i=\emptyset$  for all $1\le i\le m$,
\item[(2)]
$A_i\cap B_j\neq \emptyset$  or $A_j\cap B_i\neq \emptyset$   for all $1\le i,j \le m$, $i\neq j$.
\end{enumerate}
Then the system $\{(A_i,B_i)\}^m_{i=1}$ is called a $(k,l)$-weakly cross intersecting set pair system.\\
Let $m_{max}(k,l)$ denote the largest $m\in \mathbb{Z}$ for which a  $(k,l)$-weakly cross intersecting set pair system $\{(A_i,B_i)\}^m_{i=1}$ exists.
\end{definition}

Surprisingly, much less is known about the maximum size of a weakly cross intersecting set pair system compared to the original case. Concerning the upper bound, Tuza showed \cite{T3} that $m_{max}(k,l)\leq \frac{(k+l)^{k+l}}{k^kl^l}$.  Kir\'aly, Nagy, P\'alv\"olgyi and Visontai gave a construction \cite{kiraly} that provides $\lim \inf_{k+l\rightarrow \infty} m_{max}(k,l)\geq (2-o(1))\binom{k+l}{k}$. Moreover, they conjectured the latter result to be sharp:

\begin{conjecture}[\cite{kiraly}]\label{weak}
$$m_{max}(k,l)\leq 2\binom{k+l}{k}.$$
\end{conjecture}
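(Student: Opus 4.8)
The plan is to mimic the linear-algebra proofs of the (skew) Bollob\'as theorems and to produce the extra factor of $2$ from a tournament that sits naturally on the index set. First I would reduce to the uniform case $|A_i|=k$, $|B_i|=l$: adjoining fresh ground-set elements to the $A_i$'s and $B_i$'s preserves $A_i\cap B_i=\emptyset$, cannot hurt the weak cross-intersection property (which only asks for one of the two arcs), and leaves $m_{max}(k,l)$ unchanged. Next I would record the cross-intersections as a digraph $D$ on $[m]$ by putting $i\to j$ exactly when $A_i\cap B_j\neq\emptyset$; the weak condition says that for every unordered pair at least one arc is present, so $D$ contains a tournament.

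A direct random-permutation argument in the style of Bollob\'as is not enough here: if $\pi$ is a uniformly random linear order of the ground set and $X_i$ is the event that all of $A_i$ precedes all of $B_i$, then for a fixed $\pi$ the indices with $X_i$ holding are linearly ordered by the positions of their ``gaps'' $[\,\max A_i,\min B_i\,]$ and in that order form a skew cross-intersecting system, so at most $\binom{k+l}{k}$ of the $X_i$ hold; since each has probability $1/\binom{k+l}{k}$ this only yields $m\le\binom{k+l}{k}^2$, which is even weaker than Tuza's bound. The factor $2$ must instead reflect the structure of the extremal examples: the matching construction of Kir\'aly, Nagy, P\'alv\"olgyi and Visontai is essentially two superimposed copies of a colex-type skew-optimal system, so a tight proof should ``see'' two copies of a $\binom{k+l}{k}$-dimensional object.

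Accordingly, I would assign generic vectors $w_v\in\mathbb{R}^{k+l}$ to the ground-set elements and set $\Lambda_{A_i}=\bigwedge_{v\in A_i}w_v$ and $\Lambda_{B_i}=\bigwedge_{v\in B_i}w_v$ in the spaces $\bigwedge^{k}\mathbb{R}^{k+l}$ and $\bigwedge^{l}\mathbb{R}^{k+l}$, each of dimension $\binom{k+l}{k}$; by genericity $\Lambda_{A_i}\wedge\Lambda_{B_j}=0$ precisely when $A_i\cap B_j\neq\emptyset$, i.e.\ precisely when $i\to j$, while $\Lambda_{A_i}\wedge\Lambda_{B_i}\neq 0$ always. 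The goal is to show that for generic scalars $\mu_i$ the $m$ vectors $(\Lambda_{A_i},\mu_i\Lambda_{B_i})$ are linearly independent in $\bigwedge^{k}\mathbb{R}^{k+l}\oplus\bigwedge^{l}\mathbb{R}^{k+l}$, a space of dimension $2\binom{k+l}{k}$. When $D$ happens to be acyclic this is the usual triangulation: take a minimal dependence, wedge with an appropriate $\Lambda_{A_{i_0}}$ (resp.\ $\Lambda_{B_{j_0}}$), and all but one term dies, recovering the $\binom{k+l}{k}$ bound of \tref{skewboll}.

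The hard part is exactly the directed cycles of $D$. When $D$ is not acyclic the triangulation fails, and a dependence $\sum_i c_i(\Lambda_{A_i},\mu_i\Lambda_{B_i})=0$ just says that the space of linear dependences of $\{\Lambda_{A_i}\}$ and the generically rescaled space of dependences of $\{\Lambda_{B_i}\}$ -- of dimensions $m-r_A$ and $m-r_B$ with $r_A,r_B\le\binom{k+l}{k}$ -- share a nonzero vector. For subspaces in sufficiently general position with respect to the coordinate torus the intersection would be trivial once $m>r_A+r_B$, forcing $m\le 2\binom{k+l}{k}$; but this genericity is not automatic (it can fail for subspaces aligned with coordinate flats), so the crux -- and the reason the bound is still only a conjecture -- is to show that the combinatorial provenance of these two dependence spaces, in particular the way the strongly connected components of $D$ interact with the wedge relations, excludes such degeneracies.
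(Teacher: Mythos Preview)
This statement is recorded in the paper as \emph{Conjecture}~\ref{weak}, attributed to \cite{kiraly}; the paper does not prove it, and to my knowledge it is still open. There is therefore no ``paper's own proof'' to compare your proposal against.

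As for the proposal itself, you are candid that it is not a proof but a plan, and you locate the gap accurately. The exterior-algebra setup is the natural one, and in the acyclic case it indeed collapses to the usual skew Bollob\'as argument and gives $\binom{k+l}{k}$, not merely $2\binom{k+l}{k}$. The entire content of the conjecture, however, lives in the non-acyclic case, and there your argument stops: the assertion that the dependence space of $\{\Lambda_{A_i}\}$ and a generically rescaled dependence space of $\{\Lambda_{B_i}\}$ meet only at $0$ once $m>2\binom{k+l}{k}$ is exactly a restatement of what has to be proved, and (as you say) torus-genericity of two fixed subspaces of $\mathbb{R}^m$ is not automatic. Nothing in the combinatorics you have written down so far forces the required transversality, and no mechanism is proposed for exploiting the strongly connected components of $D$ beyond noting that they are the obstruction. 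In short, the proposal identifies a reasonable target space of dimension $2\binom{k+l}{k}$ and explains why the standard proofs fall short, but it does not supply the missing idea; the conjecture remains a conjecture after your outline just as it does after the paper.

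One small side remark: in your probabilistic digression, the claim that the indices $i$ with ``all of $A_i$ before all of $B_i$'' form, in gap order, a \emph{skew} cross-intersecting system already uses one direction of the weak intersection hypothesis and is worth checking carefully; in any case you correctly conclude that this route only yields $m\le\binom{k+l}{k}^2$, which is weaker than Tuza's $\frac{(k+l)^{k+l}}{k^k l^l}$ and far from the target.
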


These questions motivate the investigation of

\[
n_2(k,l)=\max\left\{\left|\bigcup_{i=1}^m(A_i\cup B_i)\right|: (A_1,B_1),\dots,(A_m,B_m) \text{ is a (k,l)-weakly cross intersecting system} \right\}.
\]

First, observe that the idea of the proof of \pref{skewpoints} works smoothly to obtain an upper bound on $n_2(k,l)$, since we may define weakly cross intersecting set pair systems $\cM_j$ similarly from a given $(k,l)$-weakly cross intersecting set pair system. Thus the exact upper bound only depends on $m_{max}(k,l)$. Hence, assuming that Conjecture \ref{weak} holds, we get the double of the upper bound of \pref{skewpoints}.

A lower bound follows from 

\begin{construction}\label{harmas} Let $Y$ be a set of $k+l-1$ elements with  $k \le l$. Assign a subset $B_i'\subset (Y\setminus A_i')$ of size $l-1$ to each  $k-1$ element subset $A_i'\subset Y$ in such a way that the sets $B_i'$ are distinct. This can be done due to the K\H onig-Hall theorem and the fact that $k \le l$. For each $A_i'$, assign furthermore three distinct elements $x_i,y_i,z_i\not\in Y$. Take the set pairs $(A'_i\cup \{x_i\}, B'_i\cup \{y_i\})$, $(A'_i\cup \{y_i\}, B'_i\cup \{z_i\})$, $(A'_i\cup \{z_i\}, B'_i\cup \{x_i\})$ for all $i$. 
  This way we obtain $3\binom{k+l-1}{k-1}$  set pairs  such that the union of these sets consists of $k+l-1+3\binom{k+l-1}{k-1}$ points. 
\end{construction}

\begin{prop}
$k+l-1+3\binom{k+l-1}{k-1}\leq n_2(k,l).$ 
\end{prop}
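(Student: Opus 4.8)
The plan is to exhibit the system described in Construction~\ref{harmas} and verify that it is indeed $(k,l)$-weakly cross intersecting, so that its vertex set gives the claimed lower bound on $n_2(k,l)$.

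First I would check that the construction is well defined. We need to assign to each of the $\binom{k+l-1}{k-1}$ sets $A_i'\in\binom{Y}{k-1}$ a distinct set $B_i'\in\binom{Y\setminus A_i'}{l-1}$; since $|Y\setminus A_i'|=(k+l-1)-(k-1)=l$, there are exactly $\binom{l}{l-1}=l$ choices of $B_i'$ for each $A_i'$, and $\binom{k+l-1}{k-1}=\binom{k+l-1}{l}$ choices of $A_i'$. Consider the bipartite graph between the $(k-1)$-subsets and the $(l-1)$-subsets of $Y$ with an edge when they are disjoint; this graph is biregular, hence by the K\H onig--Hall theorem it has a matching saturating the smaller side, which is the side of the $(k-1)$-subsets precisely because $k\le l$. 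This yields the required injective assignment $A_i'\mapsto B_i'$. Adjoining the fresh private elements $x_i,y_i,z_i$ gives sets of size exactly $k$ and $l$ as required, and clearly $A_i'\cup\{x_i\}$ is disjoint from $B_i'\cup\{y_i\}$ (and similarly for the other two pairs of each triple), so condition~(1) holds.

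Next I would verify the weak cross-intersecting condition~(2) for two pairs coming from indices $i$ and $j$. If $i=j$, the three pairs within one triple are cyclically arranged so that, say, $(A_i'\cup\{x_i\})\cap(B_i'\cup\{z_i\})=\emptyset$ but $(A_i'\cup\{z_i\})\cap(B_i'\cup\{x_i\})\ni x_i$, and the other two ordered pairs within the triple intersect likewise through one of $x_i,y_i,z_i$; so for any two of the three pairs in a single triple, at least one of the two cross-intersections is nonempty. If $i\neq j$, the private elements play no role, so it suffices to show $A_i'\cap B_j'\neq\emptyset$ or $A_j'\cap B_i'\neq\emptyset$. Suppose not: then $A_i'\subseteq Y\setminus B_j'$ and $A_j'\subseteq Y\setminus B_i'$. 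Here $|Y\setminus B_j'|=(k+l-1)-(l-1)=k$, so $A_i'\subseteq Y\setminus B_j'$ with $|A_i'|=k-1$ forces $A_i'$ to miss exactly one element of the $k$-set $Y\setminus B_j'$; a short counting/incidence argument (comparing how $B_i'$ sits relative to $A_i',A_j'$) shows this cannot happen simultaneously for the ordered pair $(j,i)$, giving a contradiction. Once (2) is established, the union of all sets has size $k+l-1$ (the elements of $Y$) plus $3\binom{k+l-1}{k-1}$ (the three private elements per triple, all distinct across triples), which is exactly the bound in the statement.

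The main obstacle I anticipate is the case $i\neq j$ of condition~(2): one must rule out that both $A_i'\cap B_j'=\emptyset$ and $A_j'\cap B_i'=\emptyset$ using only the fact that the assignment $A'\mapsto B'$ is an injection between disjoint-pair classes. The cleanest route is probably to argue contrapositively at the level of the biregular disjointness graph together with the size identity $|Y\setminus B'|=k$: if $A_i'\cap B_j'=\emptyset$ then $A_i'$ occupies $k-1$ of the $k$ slots of $Y\setminus B_j'$, which pins down $B_j'$ as the unique $(l-1)$-set disjoint from both $A_i'$ and the one leftover element, and symmetrically for $A_j', B_i'$; feeding these two constraints into the injectivity of the assignment forces $i=j$. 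I would write this out carefully, as it is the only genuinely non-routine point; the size bookkeeping for the vertex count and condition~(1) is immediate.
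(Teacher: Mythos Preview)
Your approach matches the paper's (which merely asserts that Construction~\ref{harmas} ``is easy to see'' to be weakly cross intersecting), and your treatment of well-definedness, condition~(1), and the within-triple case of condition~(2) is correct. For the $i\ne j$ case, however, your sketch invokes only the two assumed disjointnesses $A_i'\cap B_j'=\emptyset$ and $A_j'\cap B_i'=\emptyset$; from these alone injectivity of $A'\mapsto B'$ does not yet bite, so the phrase ``feeding these two constraints into the injectivity \dots\ forces $i=j$'' is not justified as stated. You also need the two disjointnesses built into the construction, namely $A_i'\cap B_i'=\emptyset$ and $A_j'\cap B_j'=\emptyset$. With all four in hand the argument is immediate: both $A_i'$ and $A_j'$ are $(k-1)$-subsets of each of the $k$-element sets $Y\setminus B_i'$ and $Y\setminus B_j'$; since $A_i'\ne A_j'$ we have $|A_i'\cup A_j'|\ge k$, which forces $Y\setminus B_i'=A_i'\cup A_j'=Y\setminus B_j'$ and hence $B_i'=B_j'$, contradicting the distinctness of the $B'$'s. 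Once this step is in place, the vertex count $k+l-1+3\binom{k+l-1}{k-1}$ follows exactly as you wrote.
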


\begin{proof}
The proposition follows from the fact that  Construction \ref{harmas} provides a weakly cross intersecting set pair system, which is easy to see.
\end{proof}

\noindent\textbf{Acknowledgment.} We would like to thank an anonymous referee for pointing out an error in a previous version of the manuscript and for their many helpful remarks to improve the presentation of the paper.


\begin{thebibliography}{10}

\bibitem{BBM}
J.~Balogh, T.~Bohman, and D.~Mubayi.
\newblock {Erd{\H o}s--Ko--Rado} in random hypergraphs.
\newblock {\em Combinatorics, Probability and Computing}, 18(05):629--646,
  2009.

\bibitem{BDDLS}
J.~Balogh, S.~Das, M.~Delcourt, H.~Liu, and M.~Sharifzadeh.
\newblock The typical structure of intersecting families of discrete
  structures.
\newblock {\em arxiv preprint arXiv:1408.2559}, 2014.

\bibitem{B}
B.~Bollob{\'a}s.
\newblock On generalized graphs.
\newblock {\em Acta Mathematica Hungarica}, 16(3):447--452, 1965.

\bibitem{BNR}
B.~Bollob{\'a}s, B.~P. Narayanan, and A.~M. Raigorodskii.
\newblock On the stability of the {Erd{\H o}s-Ko-Rado theorem}.
\newblock {\em arXiv preprint arXiv:1408.1288}, 2014.

\bibitem{BMMV}
A.~Brouwer, C.~Mills, W.~Mills, and A.~Verbeek.
\newblock Counting families of mutually intersecting sets.
\newblock {\em the electronic journal of combinatorics}, 20(2):P8, 2013.

\bibitem{E}
P.~Erd{\H o}s.
\newblock Problems and results in combinatorial analysis.
\newblock {\em Proc. Symp. Pure Math. AMS}, 19:77--89, 1971.

\bibitem{EKR}
P.~Erd{\H o}s, C.~Ko, and R.~Rado.
\newblock Intersection theorems for systems of finite sets.
\newblock {\em The Quarterly Journal of Mathematics}, 12(1):313--320, 1961.

\bibitem{EL}
P.~Erd{\H o}s and L.~Lov{\'a}sz.
\newblock Problems and results on 3-chromatic hypergraphs and some related
  questions.
\newblock {\em Infinite and finite sets}, 10:609--627, 1975.

\bibitem{F}
P.~Frankl.
\newblock An extremal problem for two families of sets.
\newblock {\em European Journal of Combinatorics}, 3(2):125--127, 1982.

\bibitem{GHO}
M.~M. Gauy, H.~H{\`a}n, and I.~C. Oliveira.
\newblock {Erd{\H o}s-Ko-Rado} for random hypergraphs: asymptotics and
  stability.
\newblock {\em arXiv preprint arXiv:1409.3634}, 2014.

\bibitem{HK1}
A.~Hamm and J.~Kahn.
\newblock On { Erd{\H o}s-Ko-Rado} for random hypergraphs {I}.
\newblock {\em arXiv preprint arXiv:1412.5085}, 2014.

\bibitem{HK2}
A.~Hamm and J.~Kahn.
\newblock On {Erd{\H o}s-Ko-Rado} for random hypergraphs {II}.
\newblock {\em arXiv preprint arXiv:1406.5793}, 2014.

\bibitem{K}
G.~Kalai.
\newblock Intersection patterns of convex sets.
\newblock {\em Israel Journal of Mathematics}, 48(2-3):161--174, 1984.

\bibitem{kiraly}
Z.~Kir\'aly, Z.~L. Nagy, D.~P\'alv\"olgyi, and M.~Visontai.
\newblock On families of weakly cross-intersecting set-pairs.
\newblock {\em Fundamenta Informaticae}, 117(1-4):189--198, 2012.

\bibitem{L}
L.~Lov{\'a}sz.
\newblock Flats in matroids and geometric graphs.
\newblock In {\em Combinatorial surveys (Proc. Sixth British Combinatorial
  Conf., Royal Holloway Coll., Egham, 1977)}, pages 45--86. Academic Press
  London, 1977.

\bibitem{M}
K.~Majumder.
\newblock On the maximum number of points in a maximal intersecting family of
  finite sets.
\newblock {\em arXiv preprint arXiv:1402.7158}, 2014.

\bibitem{T}
Z.~Tuza.
\newblock Critical hypergraphs and intersecting set-pair systems.
\newblock {\em Journal of Combinatorial Theory, Series B}, 39(2):134--145,
  1985.

\bibitem{T3}
Z.~Tuza.
\newblock Inequalities for two set systems with prescribed intersections.
\newblock {\em Graphs and Combinatorics}, 3(1):75--80, 1987.

\end{thebibliography}
\end{document}